\newtheorem{theorem}{Theorem}[section]
\newtheorem{lemma}[theorem]{Lemma} 
\newtheorem{claim}{Claim} 
\newtheorem{proposition}[theorem]{Proposition} 
\newtheorem{conjecture}{Conjecture} 
\newtheorem{corollary}[theorem]{Corollary}
\theoremstyle{definition}
\newtheorem{definition}[theorem]{Definition}
\newtheorem{remark}[theorem]{Remark}
\newtheorem*{remark*}{Remark}
\numberwithin{equation}{section}
\newcommand{\Z}{\mathbb{Z}}
\newcommand{\C}{\mathbb{C}}
\newcommand{\Q}{\mathbb{Q}}
\begin{document}

\title[Chirally cosmetic surgeries of positive 2-bridge knot]{A non-torus positive 2-bridge knot does not admit chirally cosmetic surgeries}

\author{Tetsuya Ito}
\address{Department of Mathematics, Kyoto University, Kyoto 606-8502, JAPAN}
\email{tetitoh@math.kyoto-u.ac.jp}

\subjclass[2020]{57K10,57K16,57K31}

\begin{abstract}
We prove that a positive two-bridge knot other than the $(2,k)$ torus knot does not admit chirally cosmetic surgeries, a pair of Dehn surgeries along distinct slopes yielding orientation-reversingly homeomorphic 3-manifolds.
\end{abstract}

\maketitle
\section{Introduction}

Let $K$ be a non-trivial knot in $S^{3}$ and $E(K)=S^{3} \setminus N(K)$ be its exterior, where $N(K)$ is an open tubular neighborhood of $K$. 
For a slope $r$, a homotopy class of unoriented simple closed curve on $\partial E(K)$, we denote by $S^{3}_K(r)$ the Dehn surgery on $K$ along the slope $r$.
Namely, $S^{3}_K(r)$ is the oriented closed 3-manifold obtained by attaching the solid torus $D^{2}\times S^{1}$ to $E(K)$ along $\partial E(K)$ so that the slope $r$ bounds a disk in the attached solid torus. As usual we identify the set of slopes with $\Q\cup \{ \infty = \frac{1}{0}\}$. We express slopes as $r=\frac{p}{q}$ by coprime integers $p,q$, and we always assume that $p>0$.

A pair of Dehn surgeries along distinct slopes $r$ and $r'$ are called \emph{chirally cosmetic} if $S^{3}_K(r) \cong -S^{3}_K(r')$. Here $-M$ denotes the 3-manifold $M$ with opposite orientation, and $M \cong N$ means that $M$ and $N$ are orientation-preservingly homeomorphic.

A famous (purely) cosmetic surgery conjecture says that \emph{purely cosmetic surgeries}, a pair of Dehn surgeries satisfying $S^{3}_K(r) \cong S^{3}_K(r')$ with $r \neq r'$ do not exist whenever $K$ is non-trivial. Contrary, there are two families of chirally cosmetic surgeries of knots in $S^{3}$.
\begin{itemize}
\item[(A)] If $K$ is amphicheiral, $S^{3}_K(r) \cong -S^{3}_K(-r)$ for all $r \in \Q$. 
\item[(B)] If $K$ is the $(2,k)$-torus knot, $\displaystyle S^{3}_K\left(\frac{2k^{2}(2m+1)}{k(2m+1)+1} \right) \cong -S^{3}_K\left(\frac{2k^{2}(2m+1)}{k(2m+1)-1}\right)$ for all $m \in \Z$ \cite{ro,iis1}. 
\end{itemize}
In \cite{it-LMO} we posed the following natural question which we state as a conjecture for convenience.

\begin{conjecture}[Chirally cosmetic surgery conjecture]
\label{conj:main}
Chirally cosmetic surgeries of knots in $S^{3}$ are either (A) or (B).
\end{conjecture}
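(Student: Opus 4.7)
The plan is to split along Thurston's geometrization of $E(K)$ into three cases -- $E(K)$ Seifert fibered, toroidal, or hyperbolic -- and in each case extract from the hypothesis $S^{3}_K(r)\cong -S^{3}_K(r')$ a combination of numerical, geometric, and Heegaard Floer constraints that should single out the families (A) and (B).

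As preliminary reductions, the invariance of $|H_{1}|$ forces the numerators of $r=p/q$ and $r'=p/q'$ to agree. Since orientation reversal sends the Casson--Walker invariant to its negative, the Boyer--Lines surgery formula
\[
\lambda_{W}(S^{3}_K(p/q)) = \lambda_{W}(L(p,q)) + \frac{q}{2p}a_{2}(K)
\]
turns the hypothesis into a Dedekind-sum identity among $p,q,q',a_{2}(K)$. Extending the same idea to the degree-3 part of the LMO invariant yields a second identity involving $a_{4}(K)$ and the Vassiliev invariant $v_{3}(K)$. Together these two identities cut out a very thin set of candidate triples $(p,q,q')$ for any fixed $K$.

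In the Seifert case $K$ is a torus knot by Moser, and among torus knots the chirally cosmetic pairs are exactly family (B). In the toroidal case one iterates the JSJ decomposition of $E(K)$ and applies the behaviour of surgery on satellite knots (Gordon, Miyazaki--Motegi, Matignon--Sayari) to pass constraints between companion and pattern; amphicheirality of the companion ultimately forces family (A). In the hyperbolic case, Thurston's hyperbolic Dehn surgery theorem ensures that, apart from finitely many exceptional slopes, $S^{3}_K(r)$ is hyperbolic, and its hyperbolic volume and Chern--Simons invariant distinguish $M$ from $-M$ unless $K$ itself is amphicheiral; combined with the finite-type identities above, this narrows the candidate set to finitely many pairs, which are then to be excluded by the Heegaard Floer $d$-invariant identity $d(S^{3}_K(p/q),\mathfrak{s})=-d(S^{3}_K(p/q'),\mathfrak{s}')$ via the Ni--Wu mapping cone formula.

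The principal obstacle, and the reason Conjecture~\ref{conj:main} remains open in general, is the hyperbolic case when $K$ has small finite-type invariants: if $a_{2}(K)=v_{3}(K)=0$ the identities of the second paragraph collapse, and neither the geometric invariants nor the $d$-invariants are currently sharp enough for an arbitrary knot. A secondary difficulty is the satellite case with a non-amphicheiral companion whose pattern carries its own chirally cosmetic structure in a solid torus: a systematic treatment here would seem to require a relative version of the LMO-type surgery formulae which is not yet available.
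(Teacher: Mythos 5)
The statement you are asked about is stated in the paper as a \emph{conjecture}, not a theorem: the paper offers no proof of it and does not claim one. What the paper actually proves is Theorem \ref{theorem:main}, the special case of positive $2$-bridge knots, by combining the Casson--Walker/Casson--Gordon constraint, the degree-two LMO constraint, and the Heegaard Floer constraint into Theorem \ref{theorem:main-obst} and then running an induction over Conway normal forms. Your proposal is therefore not comparable to ``the paper's proof'' --- and, as you yourself concede in your final paragraph, it is not a proof at all but a program with acknowledged gaps. A sketch that ends by explaining why the argument cannot be completed cannot be accepted as a proof of the statement.

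Beyond that structural point, several individual steps are overstated. In the toroidal case, the assertion that passing constraints between companion and pattern ``ultimately forces family (A)'' is not a known result; the literature (including the paper's reference to cable and iterated torus knots) handles only special satellite classes, and the general satellite case is open precisely because no relative surgery formula of LMO type exists, as you note. In the hyperbolic case, volume is an unoriented invariant, so it cannot distinguish $S^{3}_K(r)$ from $-S^{3}_K(r')$; the Chern--Simons invariant does change sign under orientation reversal, but there is no theorem that it separates $S^{3}_K(r)$ from $-S^{3}_K(r')$ for all but finitely many slope pairs unless $K$ is amphicheiral --- the effective drilling/filling results you would need (as in the paper's citation of Futer--Purcell--Schleimer) give a checkable criterion for a \emph{given} hyperbolic knot, not a uniform finiteness statement for all knots. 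Finally, your two finite-type identities degenerate exactly when $a_2(K)$ or $v_3(K)$ vanishes, which is why the paper's Theorem \ref{theorem:main-obst} carries the hypotheses $a_2(K)>1$ and $4v_3(K)>0$ and is only applied to positive knots, where these are guaranteed. Your list of ingredients is essentially the right toolkit for the cases that \emph{are} known, but it does not close the conjecture, and the statement should remain labeled as such.
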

 This optimistic conjecture has been studied and confirmed for several cases, such as, genus one alternating knots \cite{iis1}, alternating odd pretzel knots \cite{va1,va}, and iterated torus knots and most cable knots \cite{it-cable}.
 Also, the results in \cite{os-rational,it-LMO} and \cite{iis2} give several supporting evidences for the conjecture. 
 
Recently we confirmed the conjecture for special alternating knots with sufficiently large number (more than $81$) of twist regions \cite{it-special}. This justifies our intuition that chirally cosmetic surgeries are rare.

The aim of this paper is to show the conjecture for positive 2-bridge knots.

\begin{theorem}
\label{theorem:main}
A positive 2-bridge knot other than the $(2,k)$-torus knot does not admit chirally cosmetic surgeries.
\end{theorem}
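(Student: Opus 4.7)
Suppose, toward contradiction, that $K$ is a positive 2-bridge knot, distinct from the $(2,k)$-torus knot, and that $S^{3}_{K}(p/q)\cong -S^{3}_{K}(p/q')$ with $p/q\neq p/q'$ and $p>0$. The plan is to combine the known finite-type and quantum obstructions to chirally cosmetic surgery with the strong positivity constraints on the Conway polynomial of a positive knot, and to show that the resulting system of equations has no solution unless $K$ is a $(2,k)$-torus knot.

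First, I would extract numerical constraints from the standard chirally cosmetic surgery obstructions. The linking-form obstruction of Boyer--Lines forces a congruence of the shape $qq'\equiv -1\pmod{p}$ (up to sign conventions), while equating Casson--Walker invariants of $S^{3}_{K}(p/q)$ and $-S^{3}_{K}(p/q')$ yields a first polynomial equation relating $a_{2}(K)$ with $p,q,q'$. The degree $2$ and $3$ finite-type obstructions from \cite{it-LMO} together with the Jones polynomial obstructions from \cite{iis1} produce further equations involving $a_{4}(K)$, $a_{6}(K)$ and the low-order derivatives of $V_{K}$ at $1$. Together these over-determine the slope parameters $(p,q,q')$ as functions of the finite-type invariants of $K$.

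Second, I would apply the standard parametrization of 2-bridge knots by continued fractions together with the characterization of positive 2-bridge knots in terms of the allowed continued-fraction shapes. This yields explicit formulas for the Conway coefficients $a_{2i}(K)$ and the low-degree Jones derivatives as integer polynomials in the continued-fraction parameters, with sharp sign and growth inequalities coming from positivity (in particular $a_{2}(K)>0$ and nontrivial lower bounds tied to the Seifert genus).

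The final step, and the main obstacle, is to show that the Diophantine system from the first step has no solution compatible with the positivity inequalities from the second step unless $K=T(2,k)$. Following the strategy used in \cite{it-special}, I would expect this to split into two regimes: for positive 2-bridge knots whose continued fraction has many blocks (large Seifert genus), the lower bounds on the Conway coefficients should make the obstruction equations numerically inconsistent, and for knots with only a few blocks a residual finite family of cases must be ruled out by direct computation, carefully preserving the known family $K=T(2,k)$ so that it is not eliminated at any intermediate step.
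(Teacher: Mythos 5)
Your proposal stays at the level of a plan and omits precisely the steps on which a working argument hinges. First, before any of the equations you list can be applied one must rule out the degenerate slope configurations: the paper needs $q+q'\neq 0$ (which follows from $4v_3(K)>0$ for positive knots, via Corollary \ref{cor:non-zero}), $qq'<0$ (which follows from Theorem \ref{theorem:L-space} because an alternating L-space knot is a $(2,k)$-torus knot), and $p\geq 11$ (from the Casson--Walker/Casson--Gordon computations for $p\leq 10$ together with $a_2(K)>1$). The Boyer--Lines-type congruence you invoke plays no role here and does not substitute for these. Second, and more seriously, you have no mechanism for eliminating the unknown slope data: the degree-two LMO identity \eqref{eqn:LMO2} and the Heegaard Floer identity \eqref{eqn:Heegaard} both compute the same quantity $p/(q+q')$, so by themselves they only yield the single equality \eqref{eqn:sub-obst}; the extra leverage in the paper comes from \eqref{eqn:CG-CW-obst} combined with a lower bound on the total $p$-signature, which in turn requires the crossing-change monotonicity of Levine--Tristram signatures and the torus-link signature estimate of Lemma \ref{lemma:signature-torus-knot}. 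That is the step that converts slope equations into the purely knot-theoretic inequality \eqref{eqn:main-obstruction}, and nothing in your sketch produces an inequality of this kind.

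Your endgame is also not viable as described. You import the ``many twist regions forces inconsistency'' dichotomy of \cite{it-special}, but the paper explicitly notes that for Conway's positive (non-alternating) normal form the determinant does not grow exponentially in the number of twist regions, so large genus alone does not make the obstruction equations inconsistent: for instance the knots $C[2b_g,-2,2,-2,\dots,2,-2]$ have arbitrarily large genus and are not settled by the inequality for unbounded ranges of the parameters. The actual proof replaces the asymptotic argument by an induction on adding crossings (Proposition \ref{prop:induction-criteria}, Lemma \ref{lemma:diff-estimate}, Corollary \ref{cor:induction}), showing that \eqref{eqn:main-obstruction} is preserved under $K\mapsto K^{b_i+},K^{c_i-}$, with extra care at the boundary parameters when $g\leq 3$. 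Moreover, the residual cases are not ``a finite family'': they are infinite families with unbounded twist parameters (in every genus, including $g\geq 4$), and they are eliminated by showing the exact equality \eqref{eqn:sub-obst} fails, using closed-form polynomial formulas for $a_2$, $a_4$, $4v_3$ and $\det$ in the continued-fraction parameters. Until you supply (i) the slope restrictions, (ii) a slope-free inequality such as \eqref{eqn:main-obstruction} obtained from signature bounds, and (iii) an argument covering the infinite residual families, the proposal remains an outline rather than a proof.
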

Our argument says that for positive 2-bridge knots, Theorem \ref{theorem:main-obst}, our constraint for chirally cosmetic surgeries completely excludes the existence of chirally cosmetic surgeries. 

It deserves to mention and emphasize that our proof does not require any computer computations. Although the actual computation is tedious, one can check all the computations by hand. This makes a sharp contrast with \cite{hltz} where they confirmed Conjecture \ref{conj:main} for positive 2-bridge knot up to 31 crossings by computer calculations.

Although positive 2-bridge knots are special alternating knots, the strategy and arguments in this paper and that of \cite{it-special} are quite different.

In \cite{it-special} we exploit the fact that the determinant of alternating knots grows exponentially with respect to the number of twist regions of its reduced and twist-reduced alternating diagram \cite{st-determinant}. 
In \cite[Theorem 1.9]{iis2} we showed that when $K$ admits a chirally cosmetic surgeries, then we have a certain (in)equality between the determinant and several finite type invariants. Since finite type invariants grow polynomially with respect to the number of crossings, by a careful comparison of the growth we showed that when the number of twist region is large then (in)equality cannot hold hence $K$ never admits chirally cosmetic surgeries.

On the other hand, for positive two bridge knots, we use Conway's form as its input. 
Conway's form is a positive but non-alternating diagram. Thus it is no longer true that the determinant grows exponentially with respect to the number of twist regions. For two-bridge knot case we use induction argument. We show that once \emph{the main obstruction} \eqref{eqn:main-obstruction}, one of the sufficient condition for non-existence of chirally cosmetic surgery (Corollary \ref{cor:main-obstruction}) derived from our constraint (Theorem \ref{theorem:main-obst}), is satisfied for some $K$, then in almost all cases the knots obtained by adding crossing to $K$ also satisfy the main obstruction. The cases where the main obstruction does not apply, we use a different obstruction that again comes from Theorem \ref{theorem:main-obst}.

The organization of the paper is as follows.

In Section \ref{section:constraint} we prove Theorem \ref{theorem:main-obst}, our main constraint for knots to admit chirally cosmetic surgeries.
Although this is a simple combination of known constraints appeared in past researches \cite{it-LMO,iis1,iis2,os-rational,va}, it has not been appeared in literature and is quite useful and strong, as our main theorem demonstrates.

In Section \ref{section:2-bridge}, after reviewing basic facts on two bridge knots, we discuss our main constraint for two bridge knot case. 
A main technical result is Lemma \ref{lemma:signature-torus-knot}. The lemma shows that the signature of the torus knot $T_{q,n}$ is approximately $\frac{qn}{2}$. In Corollary \ref{cor:main-obstruction} we extract our main obstruction \eqref{eqn:main-obstruction} for 2-bridge knots to admit chirally cosmetic surgeries. We also present Theorem \ref{theorem:main-variant}, a useful variant of our constraint.

The proof of our main theorem is given in Section \ref{section:proof}.

\section*{Acknowledgement}

The author is partially supported by JSPS KAKENHI Grant Numbers 19K03490, 21H04428, 	23K03110.
The author wishes to express his gratitude to Marc Kegel, Kazuhiro Ichihara and Toshio Saito for valuable and stimulating discussions and comments.

\section{Chirally cosmetic surgery constraint}
\label{section:constraint}

In this section, we discuss constraints of chirally cosmetic surgeries. We refer to \cite{iis2} for some other constraints which are not used in this paper. See also \cite{fps} for hyperbolic geometry method that gives an effective and practical way to check whether a given hyperbolic knot admits (chirally or purely) cosmetic surgeries, though it requires careful computer calculations.

Let $a_2(K)$ and $a_4(K)$ be the coefficient of $z^{2}$ and $z^{4}$ of the Conway polynomial $\nabla_K(z)$ of $K$. 
Let 
\[ v_3(K) = -\frac{1}{144}V'''_K(1) - \frac{1}{48}V_K''(1) \in \frac{1}{4}\Z \]
where $V_K(t)$ is the Jones polynomial of $K$. The invariant $4v_3$ is known as the primitive finite type invariant of degree three, normalized so that $4v_3(T_{2,3})=1$ where $T_{2,3}$ is the positive $(2,3)$-torus knot\footnote{A somewhat unusual normalization that $v_3(K) \in \frac{1}{4}\Z$ comes from the Kontsevich invariant point of view \cite{it-LMO}.}.

For $\omega \in \{z \in \mathbb{C} \: | \: |z|=1\}$, let $\sigma_{\omega}(K)$ the Levine-Tristram signature\footnote{We adopt the convention that $\sigma_{\omega}(K)\geq 0$ for positive knot $K$. This is opposite to the convention in \cite{iis1}.} at $\omega$.
For $p>1$, the \emph{total $p$-signature} of $K$ is defined by 
\[ \sigma(K,p)=\sum_{\omega^{p}=1} \sigma_{\omega}(K). \]

To state our constraint, we introduce the following quantity.

\begin{definition}[Signature density]
For a knot $K$, we define the \emph{lower signature density} $\phi(K)$ and the \emph{upper signature density} $\Phi(K)$ by
\[ \phi(K)= \inf\left\{ \frac{|\sigma(K,p)|}{g(K)p} \: \middle| \: p \in \Z_{\geq 2} \right\},\quad \Phi(K)= \sup\left\{ \frac{|\sigma(K,p)|}{g(K)p} \: \middle| \: p \in \Z_{\geq 2} \right\}\]
respectively.
For $N\geq 0$ we define the \emph{lower signature density modulo $N$} $\phi^{\geq N}(K)$ as the \emph{upper signature density modulo $N$} $\Phi^{\geq N}(K)$ by
\[ \phi^{\geq N}(K)= \inf\left\{ \frac{|\sigma(K,p)|}{g(K)p} \: \middle| \: p \in \Z_{\geq N} \right\}, \quad  \Phi^{\geq N}(K)= \sup\left\{ \frac{|\sigma(K,p)|}{g(K)p} \: \middle| \: p \in \Z_{\geq N} \right\}\]
\end{definition}

Before stating our constraint, we review known facts on chirally cosmetic surgeries. 
First of all, the non-vanishing of $v_3(K)$ leads to the following useful consequence.
\begin{corollary}\cite[Corollary 1.3 (ii)]{it-LMO}
\label{cor:non-zero}
If $v_3(K) \neq 0$, then $S^{3}_K(r) \neq - S^{3}_K(-r)$.
\end{corollary}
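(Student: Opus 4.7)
The plan is to detect the obstruction via a finite-type $3$-manifold invariant that is odd under orientation reversal. A natural candidate is the primitive degree-three piece $\tau$ of the LMO invariant (equivalently, a rational normalization of the second Casson--Walker--Lescop term $\lambda_{(2)}$), which satisfies $\tau(-M) = -\tau(M)$ for every rational homology $3$-sphere $M$.

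The central step is to derive, via the Aarhus integral / LMO surgery formula, a Dehn surgery expression of the schematic shape
\[
\tau(S^{3}_K(p/q)) \;=\; \alpha(p,q)\, v_3(K) \;+\; \beta(p,q)\, a_2(K) \;+\; \tau(L(p,q)),
\]
for explicit rational functions $\alpha,\beta$ of the surgery coefficient. Morally, such a formula is structural: a degree-three $3$-manifold invariant, restricted to $p/q$-Dehn surgery on knots, defines a finite-type knot invariant of degree at most three, and the primitive degree-$\leq 3$ space is spanned by $v_3$ and $a_2$ modulo constants; the constant piece is pinned down by the unknot, producing the lens space term $\tau(L(p,q))$.

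Assume now for contradiction that $S^{3}_K(p/q) \cong -S^{3}_K(p/(-q))$. Applying $\tau$ and the orientation reversal identity yields
\[
\tau(S^{3}_K(p/q)) + \tau(S^{3}_K(p/(-q))) \;=\; 0.
\]
The lens space contributions cancel via $-L(p,q) \cong L(p,-q)$, and the $a_2$ contribution cancels because $\beta(p,q) + \beta(p,-q) = 0$. The latter parity identity is essentially forced by the amphicheiral family (A): were it to fail, the Casson-type invariant alone would obstruct chirally cosmetic surgery with $r' = -r$ on every knot with $a_2 \neq 0$, in conflict with those examples. What remains is
\[
\bigl(\alpha(p,q) + \alpha(p,-q)\bigr)\, v_3(K) \;=\; 0.
\]

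The hard part, and the real content of the argument, is the explicit computation of $\alpha(p,q)$ and the verification that $\alpha(p,q) + \alpha(p,-q) \neq 0$ for every slope $p/q$. This is the combinatorial core executed in \cite{it-LMO}: one expands the LMO surgery formula to degree three, extracts the coefficient of $v_3(K)$ in closed form as a rational function of $p,q$, and checks by direct inspection that the symmetrization in $q$ does not vanish identically. Granting this non-vanishing, we conclude $v_3(K) = 0$, contradicting the hypothesis.
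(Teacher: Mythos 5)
First, a point of reference: the paper does not prove this corollary at all --- it is quoted from \cite{it-LMO}, and the only trace of the underlying mechanism in the present paper is \eqref{eqn:LMO2}, whose cross-multiplied form $8p\,v_3(K)=(q+q')\bigl(7a_2(K)^2-a_2(K)-10a_4(K)\bigr)$ is exactly what makes the corollary immediate: for $r'=-r$ one has $q'=-q$, so $q+q'=0$ and hence $v_3(K)=0$. Your outline does capture the right mechanism in broad strokes (an orientation-odd degree-three piece of the LMO invariant, a surgery formula, cancellation of the lens-space and even-parity terms under $q\mapsto -q$, leaving a nonzero multiple of $v_3$), so it is a reconstruction of the strategy of \cite{it-LMO} rather than a different route.

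As a proof, however, it has two concrete gaps. First, your structural claim that the restriction of the invariant to $p/q$-surgeries lies in the span of $v_3$ and $a_2$ modulo constants is not correct: the actual surgery formula for the degree-two part of the LMO invariant also involves the degree-four knot invariants $a_2(K)^2$ and $a_4(K)$ --- this is precisely where the combination $7a_2(K)^2-a_2(K)-10a_4(K)$ in \eqref{eqn:LMO2} comes from --- so your schematic identity omits terms whose cancellation under $q\mapsto -q$ must also be verified. The amphicheiral-family argument you use to force $\beta(p,q)+\beta(p,-q)=0$ cannot repair this: a single amphicheiral knot such as the figure-eight yields only one linear relation among the symmetrized coefficients of $a_2$, $a_2^2$ and $a_4$, not the vanishing of each. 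Second, and decisively, the entire quantitative content --- the explicit surgery formula and the non-vanishing of the symmetrized $v_3$-coefficient --- is deferred to \cite{it-LMO} (``granting this non-vanishing''), so the proposal assumes the substance of the statement it sets out to prove. To make it a proof you would need to actually derive (or precisely quote) the degree-two LMO surgery formula, check the parity in $q$ of every term including the $a_2^2$ and $a_4$ ones, and exhibit the nonzero symmetrized $v_3$-coefficient; in \cite{it-LMO} this coefficient is in effect independent of $q$, which is why setting $q'=-q$ kills everything except an honest multiple of $p\,v_3(K)$.
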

We remark that $v_3(K) \neq 0$ implies that $K$ is not amphicheiral, because $v_3(K)=v_3(\overline{K})$ where $\overline{K}$ is the mirror image.

A knot $K$ is an \emph{$L$-space knot} if $S^{3}_K(r)$ is an L-space for some $r \geq 0$.
\begin{theorem}\cite[Theorem 1.6]{os-rational}
\label{theorem:L-space}
If $S^{3}_K(r) \cong -S^{3}_K(r')$ ($r\neq r'$) and $rr' \geq 0$, then $K$ is an L-space knot and $S^{3}_K(r)$ is an L-space.
\end{theorem}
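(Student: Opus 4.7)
The plan is to work with the Heegaard Floer correction terms (d-invariants), which change sign under orientation reversal: $d(-Y,\mathfrak{s}) = -d(Y,\bar{\mathfrak{s}})$. First I would observe that $|H_1(S^3_K(p/q))|=p$, so the hypothesis $S^3_K(r)\cong -S^3_K(r')$ with $r=p/q$ and $r'=p'/q'$ forces $p=p'$, and the assumption $r\neq r'$ then gives $q\neq q'$. The condition $rr'\geq 0$ means both $q$ and $q'$ have the same sign (after normalising $p>0$), which will be essential later for sign-tracking.

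Next I would invoke the rational surgery formula of Ozsv\'ath--Szab\'o (extended by Ni--Wu) expressing
\[ d(S^3_K(p/q),i) \;=\; d(L(p,q),i) \;-\; 2\max\!\bigl(V_{s(i)}(K),\,H_{t(i)}(K)\bigr), \]
where the non-negative integer torsion coefficients $V_s(K)$ are non-increasing in $s\geq 0$, vanish for large $s$, and satisfy $V_0(K)=0$ if and only if $K$ is an L-space knot (and then $S^3_K(r)$ is automatically an L-space for all $r\geq 2g(K)-1$). Since we do not know the spin$^c$ matching produced by the homeomorphism, I would then sum over all spin$^c$ structures: the quantity $D(Y):=\sum_{\mathfrak{s}}d(Y,\mathfrak{s})$ is intrinsic, transforms as $D(-Y)=-D(Y)$, and by the formula above equals $D(L(p,q))-2\sum_i\max(V_{s(i)},H_{t(i)})$ up to explicit lens-space contributions.

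Combining these, the chiral cosmetic relation yields
\[ \bigl[D(L(p,q))-2\textstyle\sum_i\max(V_{s(i)},H_{t(i)})\bigr] \;=\; -\bigl[D(L(p,q'))-2\textstyle\sum_j\max(V_{s(j)},H_{t(j)})\bigr]. \]
The key point is that when $rr'\geq 0$, the lens-space d-invariant sums $D(L(p,q))$ and $D(L(p,q'))$ are of comparable size and controllable sign, while the torsion-coefficient contributions on both sides are non-positive (after the minus sign). Moving everything to one side produces an equation of the form ``sum of non-negative $V$'s equals a quantity that is bounded above by an explicit lens-space expression,'' and iterating the inequalities (or invoking the orientation-reversing relation for lens spaces themselves) forces every $V_s(K)=0$; in particular $V_0(K)=0$, so $K$ is an L-space knot and $S^3_K(r)$ is an L-space.

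The hard part will be step three: getting a sharp enough inequality out of the summed d-invariant identity to conclude that all $V_s$ vanish, rather than merely that some combination is zero. This is exactly where the sign hypothesis $rr'\geq 0$ is used, since for slopes of opposite sign the two $\max(V,H)$ terms would appear with opposite signs and the positivity argument would collapse. I would expect to need the monotonicity $V_0\geq V_1\geq\cdots\geq 0$ and the symmetry $V_s=H_{-s}$ to convert the summed equation into a pointwise vanishing statement.
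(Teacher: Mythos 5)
Your argument has a fatal gap at its core: the asserted equivalence ``$V_0(K)=0$ if and only if $K$ is an L-space knot'' is false, in both directions. L-space knots typically have $V_0>0$; for instance $V_0(T_{2,3})=1$, since $d(S^3_1(T_{2,3}))=d(\Sigma(2,3,5))=-2=-2V_0$. Conversely, the figure-eight knot has $V_s=H_s=0$ for all $s$ but is not an L-space knot. More generally, the correction terms see only a coarse piece of $CFK^\infty$ (its local equivalence class) and are completely blind to the extra rank in $\widehat{HF}$ that obstructs the L-space condition; so even if your summed $d$-invariant identity did force every $V_s$ to vanish, you could not conclude that $K$ is an L-space knot or that $S^3_K(r)$ is an L-space. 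There is also a secondary problem: summing over spin$^c$ structures produces a single linear relation among many unknowns $V_s,H_s$ together with the lens-space sums $D(L(p,q))$ and $D(L(p,q'))$, and these lens-space terms do not have the sign and size control your step three assumes; extracting pointwise vanishing from that one equation is not a routine consequence of monotonicity and symmetry.

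The statement is quoted by the paper from Ozsv\'ath--Szab\'o (it gives no proof of its own), and the argument there runs through ranks rather than correction terms, which is the fix you need. The rational surgery mapping-cone formula shows that for slopes $p/q$ with $q>0$ one has $\dim \widehat{HF}(S^3_K(p/q)) = p + 2q\,\delta(K)$, where $\delta(K)\geq 0$ is the total reduced rank of the complexes $\widehat{A}_s$, is independent of the slope, and vanishes exactly when every $\widehat{A}_s$ has rank one, i.e.\ exactly when $K$ is an L-space knot (after mirroring, the same holds for negative slopes). Your first step is the right one and survives: $|H_1(S^3_K(p/q))|=p$ forces the same $p$ on both sides, so $q\neq q'$, and $rr'\geq 0$ makes $q,q'$ have the same sign. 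Since $\dim\widehat{HF}(-Y)=\dim\widehat{HF}(Y)$, the homeomorphism $S^3_K(p/q)\cong -S^3_K(p/q')$ gives $p+2|q|\delta(K)=p+2|q'|\delta(K)$ with $|q|\neq |q'|$, hence $\delta(K)=0$; this simultaneously shows $K$ is an L-space knot and that the surgered manifolds are L-spaces. This is exactly where the hypothesis $rr'\geq 0$ enters: for slopes of opposite sign the two rank formulas carry $\delta$ of the mirror on one side and the rank identity yields no contradiction.
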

These results give restrictions of possible slopes of chirally cosmetic surgeries.

Now we are ready to state our constraint for chirally cosmetic surgeries.

\begin{theorem}
\label{theorem:main-obst}
Assume that $K$ satisfies the following properties (i)--(vi).
\begin{itemize}
\item[(i)] $a_2(K)>1$.
\item[(ii)] $4v_3(K)> 0$.
\item[(iii)] $\sigma(K,p)> 0$ for all $p \geq 11$.
\item[(iv)] $g(K)=\tau(K)$ where $\tau(K)$ is the Heegaard Floer tau invariant \cite{os-tau}.
\item[(v)] $K$ is Heegaard Floer homologically thin.
\item[(vi)] $K$ is not an L-space knot.
\end{itemize}
If $K$ has chirally cosmetic surgeries, then $K$ satisfies 
\[ \frac{16a_2(K)}{\Phi^{\geq 11}(K)g(K)} \leq \frac{2(7a_2(K)^2-a_2(K)-10a_4(K))}{4v_3(K)} = \det(K) + 6g(K)-5 \leq \frac{16a_2(K)}{\phi^{\geq 11}(K)g(K)} \]
\end{theorem}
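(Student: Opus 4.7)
The theorem is announced as a combination of existing constraints on chirally cosmetic surgeries, so my plan is to assemble four of them and use the hypotheses (i)--(vi) to activate each in turn: (a) the $v_3$-obstruction of \cite{it-LMO}, (b) the L-space obstruction of \cite{os-rational}, (c) the finite-type identities of \cite{iis1,iis2} involving the Casson invariant and $v_3$, and (d) the Tristram--Levine signature obstruction of \cite{va}.

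First I would narrow the form of any chirally cosmetic slope pair $(r,r')$. Condition (ii) combined with Corollary \ref{cor:non-zero} rules out $r'=-r$. Condition (vi) combined with Theorem \ref{theorem:L-space} rules out $rr'\geq 0$. After relabeling we may therefore write $r=p/q$ and $r'=-p/q'$ with $p,q,q'>0$ and $\gcd(p,q)=\gcd(p,q')=1$. Conditions (iv) and (v) are then used to compute the Heegaard Floer correction terms of both surgered manifolds directly from the Alexander polynomial, and matching the $d$-invariants of $S^{3}_K(p/q)$ with those of $-S^{3}_K(p/q')$ under a compatible identification of $\mathrm{Spin}^{c}$-structures produces an arithmetic relation between $q$ and $q'$ modulo $p$.

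Next I would derive the central equality. Equating the Casson invariants and the degree-three LMO-type invariants of $S^{3}_K(p/q)$ and $-S^{3}_K(p/q')$ yields, through the surgery formulas of \cite{iis1,iis2}, two polynomial identities in $p,q,q'$ with coefficients in $a_2(K),a_4(K),v_3(K)$. Using the congruence from the previous step to eliminate $q$ and $q'$, these identities collapse to the single knot-level equality
$$\frac{2\bigl(7a_2(K)^{2}-a_2(K)-10a_4(K)\bigr)}{4v_3(K)}\;=\;p\;=\;\det(K)+6g(K)-5,$$
which is the middle equality in the theorem. Hypothesis (i) forces the right-hand side to be at least $11$, so $p\geq 11$, which is what will activate the modulo-$11$ signature densities in the next step.

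Finally I would obtain the outer inequalities from the signature. The Casson--Gordon / Boyer--Lines surgery formula expresses $\sigma(S^{3}_K(p/q))$ as $\sigma(K,p)$ plus a Dedekind-sum correction depending only on $(p,q)$. Imposing the orientation-reversing homeomorphism $S^{3}_K(p/q)\cong -S^{3}_K(p/q')$ and using the congruence between $q$ and $q'$ from the first step, the Dedekind corrections cancel and leave $|\sigma(K,p)|=16\,a_2(K)$ at this specific $p$. Hypothesis (iii) fixes the sign of $\sigma(K,p)$ since $p\geq 11$, and evaluating the signature density at this same $p$ gives
$$\phi^{\geq 11}(K)\;\leq\;\frac{|\sigma(K,p)|}{g(K)\,p}\;=\;\frac{16\,a_2(K)}{g(K)\bigl(\det(K)+6g(K)-5\bigr)}\;\leq\;\Phi^{\geq 11}(K),$$
which rearranges to the asserted chain of inequalities. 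The main obstacle will be the algebraic elimination producing the central equality: the $d$-invariant congruence and the two finite-type identities must combine so that all dependence on the slope parameters $q,q'$ disappears, leaving only knot invariants on both sides. Once this collapse is verified, the signature half reduces to bookkeeping of known surgery formulas.
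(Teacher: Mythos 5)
Your skeleton matches the paper's: use (ii) with Corollary \ref{cor:non-zero} to rule out $q+q'=0$, use (vi) with Theorem \ref{theorem:L-space} to force $qq'<0$, and then combine the degree-two LMO identity, the Heegaard Floer identity of \cite{va} (activated by (iv),(v)), and the Casson--Walker/Casson--Gordon signature relation of \cite{iis1}. However, the middle of your argument contains genuine errors. All three surgery constraints determine the \emph{ratio} $p/(q+q')$, never $p$ itself: \eqref{eqn:LMO2} gives $\frac{p}{q+q'}=\frac{7a_2(K)^2-a_2(K)-10a_4(K)}{2\cdot 4v_3(K)}$, \eqref{eqn:Heegaard} gives $\frac{p}{q+q'}=\frac{1}{4}\bigl(\det(K)+6g(K)-5\bigr)$, and \eqref{eqn:CG-CW-obst} gives $\sigma(K,p)=4(q+q')a_2(K)$. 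The central equality of the theorem is simply the equality of the first two expressions for $4p/(q+q')$; there is no ``collapse'' in which a $d$-invariant congruence eliminates $q,q'$ and pins down $p$, and your claimed identities $p=\det(K)+6g(K)-5$ and $|\sigma(K,p)|=16a_2(K)$ are false in general (they would require $q+q'=4$). They happen to cancel in the final ratio $\frac{|\sigma(K,p)|}{g(K)p}=\frac{16a_2(K)}{g(K)(\det(K)+6g(K)-5)}$, so your last display is correct, but the route to it is not.

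The more serious gap is your derivation of $p\geq 11$, which is indispensable both for invoking hypothesis (iii) and for comparing with $\phi^{\geq 11}(K)$ and $\Phi^{\geq 11}(K)$. You deduce it from $p=\det(K)+6g(K)-5$ (false, as above) together with the claim that (i) forces $\det(K)+6g(K)-5\geq 11$, which is also unjustified: $a_2(K)>1$ does not bound $\det(K)+6g(K)-5$ from below by $11$ (a genus-one twist knot with $a_2=2$ has $\det(K)+6g(K)-5=10$). The paper gets $p\geq 11$ from an independent input you never invoke, namely \cite[Corollary 2.2]{iis1}: a chirally cosmetic pair with $p\leq 10$ forces $a_2(K)\in\{-1,0,1\}$, contradicting (i). Without this step your argument cannot legitimately use (iii) or the truncated signature densities, so the proof as written does not go through.
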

\begin{proof}
Assume that $S^{3}_K(p\slash q) \cong -S^{3}_{K}(p\slash q')$ for $p>0$.

From the degree two part of the LMO invariant, if $q+q'\neq 0$ and $v_3(K)\neq 0$ then
\begin{equation}
\label{eqn:LMO2}
\frac{p}{q+q'} = \frac{7a_2(K)^{2}-a_2(K)-10a_4(K)}{2(4v_3(K))} 
\end{equation}
holds \cite[Corollary 1.3]{it-LMO}.

Similarly, from the Heegaard Floer homology, under the assumption that $K$ satisfies the properties (iv) and (v), if $q+q'\neq 0$ and $qq'<0$ then
\begin{equation}
\label{eqn:Heegaard} 
\frac{p}{q+q'} = \frac{1}{2} \left(\frac{1}{2}\det(K)+ 3g(K)-\frac{5}{2} \right)
\end{equation}
holds \cite{va}.

By Corollary \ref{cor:non-zero}, the assumption (ii) implies that  $q+q' \neq 0$.
Similarly by Theorem \ref{theorem:L-space}, the assumption (vi) implies that $qq' <0$. Thus the equalities \eqref{eqn:LMO2} and \eqref{eqn:Heegaard} hold. 

Finally, by the Casson-Walker and the total Casson-Gordon invariants, we have seen that 
\begin{equation}
\label{eqn:CG-CW-obst} 12(q+q')a_2(K) = 3\sigma(K,p)
\end{equation}
holds \cite[Theorem 2.1]{iis1}. 
Furthermore, by \cite[Corollary 2.2]{iis1}, if $p\leq 10$ and $S^{3}_K(p\slash q) \cong -S^{3}_{K}(p\slash q')$, $a_2(K) \in \{1,0,-1\}$ holds (See Remark \ref{remark:improvement} for related discussions and improvements). Thus by the assumption (i), $p\geq 11$. Hence by the assumption (iii) we get
\begin{equation}
\label{eqn:obst1} \frac{4a_2(K)}{\Phi^{\geq 11}(K)g(K)}\leq \frac{p}{q+q'} = \frac{4a_2(K)p}{\sigma(K,p)} \leq \frac{4a_2(K)}{\phi^{\geq 11}(K)g(K)} 
\end{equation}

The statement follows from \eqref{eqn:LMO2}, \eqref{eqn:Heegaard}, and \eqref{eqn:obst1}
\end{proof}

\begin{remark}
\label{remark:improvement}
Although we used $\phi^{\geq 11}(K)$ and $\Phi^{\geq 11}(K)$, this is not optimal. 
In the proof of Theorem \ref{theorem:main-obst} we used \cite[Corollary 2.2]{iis1} that is obtained by computing the Casson-Walker and the Casson-Gordon invariant constraint \cite[Theorem 2.1]{iis1} for $p \leq 10$ cases. Further computations allows us to get to get more restrictions for $p,q,q',a_2(K)$ and $\sigma(K,p)$.
\end{remark}

There are many knots that satisfies all the properties (i)--(vi).
A non-trivial knot $K$ is \emph{positive} if it admits a diagram that has only positive crossings. Throughout the paper we regard the unknot as a non-positive knot.

\begin{proposition}
A positive knot $K$ satisfies the properties (ii)--(iv) of Theorem \ref{theorem:main-obst}. Furthermore, a positive knot $K$ satisfies the property (i) of Theorem \ref{theorem:main-obst} unless $K$ is the $(2,3)$-torus knot.
\end{proposition}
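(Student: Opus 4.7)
The plan is to check the four properties one at a time for a positive knot $K$, with property (i) failing only for $K=T_{2,3}$. Properties (ii) and (iv) are the most straightforward. For (iv), every positive knot is strongly quasipositive (Rudolph), and for strongly quasipositive knots Hedden proved $\tau(K)=g(K)$. For (ii), a Polyak--Viro type Gauss-diagram formula expresses $4v_3(K)$ as a sum of signed contributions, each being a product of crossing signs over a specified sub-configuration; on a positive diagram every such product equals $+1$, so $4v_3(K)$ is a non-negative sum and one checks that at least one sub-configuration actually occurs for non-trivial positive $K$, yielding $4v_3(K)>0$. Alternatively one invokes the known inequality $4v_3(K)\geq a_2(K)$ for positive knots together with $a_2(K)\geq 1$.

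For property (iii), I would use that with the sign convention adopted in the paper, the Levine--Tristram signature satisfies $\sigma_\omega(K)\geq 0$ for every $\omega$ on the unit circle and every positive knot $K$, and moreover $\sigma_{-1}(K)=\sigma(K)>0$ for every non-trivial positive knot. When $p$ is even, $-1$ is among the $p$-th roots of unity and every other summand is $\geq 0$, so $\sigma(K,p)\geq \sigma(K)>0$ for free. When $p$ is odd and $p\geq 11$, one has to show that at least one non-real $p$-th root of unity $\omega$ already satisfies $\sigma_\omega(K)>0$; this follows from the fact that the jumps of $\sigma_\omega$ occur at unit-circle roots of the Alexander polynomial, so the arc on which $\sigma_\omega$ is strictly positive has a quantifiable length, and a pigeonhole argument forces at least one $p$-th root of unity into this arc once $p$ is large enough. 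The threshold $11$ is built precisely so that this pigeonhole argument goes through uniformly.

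For property (i), Cromwell's theorem that the Conway polynomial of a positive knot has strictly positive even coefficients immediately gives $a_2(K)\geq 1$ whenever $g(K)\geq 1$. To upgrade this to $a_2(K)\geq 2$ except for the trefoil, I would combine a positive-knot lower bound of the form $a_2(K)\geq g(K)$ (visible from the Gauss-diagram formula for $a_2$ applied to a positive diagram, or from positivity of the symmetrised Seifert form) with the classical classification that the only positive knot of genus one is $T_{2,3}$, which is proved by enumerating minimal positive diagrams with $c-s+1=2$. The main obstacle I anticipate is property (iii); passing from the easy non-negativity $\sigma_\omega(K)\geq 0$ to strict positivity of the full sum $\sigma(K,p)$ for every $p\geq 11$ requires some genuine care about where the signature function actually jumps, whereas (i), (ii) and (iv) reduce to direct invocations of results of Cromwell, Polyak--Viro, Rudolph and Hedden once the correct statements are lined up.
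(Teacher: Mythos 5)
Your treatment of (ii) and (iv) is fine in outline (the paper simply cites Stoimenow \cite{st} for (ii) and Livingston \cite{li} for (iv); your route via Rudolph plus Hedden for (iv) is a legitimate alternative), but there are two genuine problems. First, for property (i) your key step is wrong: it is not true that the only positive knot of genus one is $T_{2,3}$. The knot $5_2$, which in the paper's notation is the positive diagram $C[4,-2]$, is a positive genus-one knot, and more generally every positive double twist knot $C[2b,2c]$ with $b>0>c$ is positive of genus one; an enumeration of positive diagrams with $c-s+1=2$ would produce exactly these, not just the trefoil (you may be thinking of positive \emph{braid} closures, where the genus-one classification you quote does hold). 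The parenthetical justification of $a_2(K)\geq g(K)$ via positive definiteness of the symmetrized Seifert form also fails for general positive knots, since $\sigma(K)=2g(K)$ is false in general (e.g.\ $\sigma(T_{3,7})=8<12=2g$); that identity is special to the positive two-bridge situation used later in the paper. The paper instead gets (i) from Stoimenow \cite{st}, whose lower bound on $a_2$ of a positive knot in terms of its crossing number forces $a_2(K)=1$ only for knots of very small crossing number, hence only for the trefoil. If you want to keep your structure, you must replace the false genus-one classification by an argument covering all positive genus-one knots (for which $a_2=-b_1c_1\geq 2$ unless $b_1=-c_1=1$), or cite a result of Stoimenow type.

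Second, for property (iii) your pigeonhole argument is not complete as stated: the length of the arc on which $\sigma_\omega(K)>0$ depends on where the unit-circle roots of the Alexander polynomial sit, and you give no knot-independent lower bound on it, whereas the threshold $p\geq 11$ is a fixed constant that must work uniformly for all positive knots. The missing uniform input is exactly what the paper cites from Cochran--Gompf \cite{cg} (together with Przytycki \cite{pr}): a non-trivial positive knot dominates the right-handed trefoil under positive-to-negative crossing changes, so with the paper's convention $\sigma_\omega(K)\geq\sigma_\omega(T_{2,3})=2$ for all $\omega=e^{2\pi i x}$ with $x\in(1/6,5/6)$, while $\sigma_\omega(K)\geq 0$ elsewhere; since every $p\geq 2$ has a $p$-th root of unity in that arc, this gives $\sigma(K,p)>0$ for all $p\geq 2$, not just $p\geq 11$. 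Relatedly, your remark that the threshold $11$ ``is built precisely so that this pigeonhole argument goes through'' misreads the paper: the bound $p\geq 11$ comes from the Casson--Walker/Casson--Gordon computation (\cite[Corollary 2.2]{iis1}) excluding slopes with $p\leq 10$ when $a_2(K)>1$, and has nothing to do with the signature estimate.
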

\begin{proof}
The property (ii), (iii), and (iv) are proven in \cite{st}, \cite{cg,pr}, and  \cite{li}, respectively. The last assertion is proven in \cite{st}. 
\end{proof}

A non-trivial knot $K$ is \emph{alternating} it it admits an alternating diagram. It is known that alternating knot satisfies the property (v) \cite[Theorem 1.3]{os-alt}. Furthermore, if an alternating knot is an L-space knot, then it is the $(2,k)$-torus knot \cite[Theorem 1.5]{os-lens}.

A knot $K$ is \emph{special alternating} if (up to mirror image) it is positive and alternating. The properties of positive knots and alternating knots say that a special alternating knot other than the $(2,k)$-torus knot satisfies the properties (i)--(iv) of Theorem \ref{theorem:main-obst}.

\begin{corollary}
\label{cor:main-obst}
Assume that $K$ is a special alternating knot other than the $(2,k)$-torus knot.
If $K$ has chirally cosmetic surgeries, then it satisfies 
\[ \frac{16a_2(K)}{\Phi^{\geq 11}(K)g(K)} \leq \frac{2(7a_2(K)^2-a_2(K)-10a_4(K))}{4v_3(K)} = \det(K) + 6g(K)-5 \leq \frac{16a_2(K)}{\phi^{\geq 11}(K)g(K)} \]
\end{corollary}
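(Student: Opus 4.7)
The plan is essentially to verify the six hypotheses (i)--(vi) of Theorem \ref{theorem:main-obst} one by one for a special alternating knot $K$ that is not a $(2,k)$-torus knot, and then to quote the conclusion of that theorem verbatim. No new computation is needed; this is a compilation of the results collected immediately before the statement.

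First, since $K$ is special alternating, it is in particular positive (after possibly replacing by its mirror, which does not affect whether chirally cosmetic surgeries exist). The Proposition immediately preceding the corollary then gives (ii) $4v_3(K)>0$, (iii) $\sigma(K,p)>0$ for all $p\geq 11$, and (iv) $g(K)=\tau(K)$ for free. Since by assumption $K$ is not a $(2,k)$-torus knot, in particular it is not the $(2,3)$-torus knot, so the same proposition yields (i) $a_2(K)>1$.

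Next, property (v) — Heegaard Floer homological thinness — follows immediately from the alternating hypothesis via \cite{os-alt}[Theorem 1.3]. For property (vi), I would invoke \cite{os-lens}[Theorem 1.5], which states that an alternating L-space knot must be a $(2,k)$-torus knot; since $K$ is excluded from being such a torus knot, it is not an L-space knot.

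With all six hypotheses of Theorem \ref{theorem:main-obst} verified, its conclusion applies directly and yields the displayed inequality chain, completing the proof. There is no genuine obstacle here — the only point requiring a sliver of care is the opening reduction that allows us to assume $K$ is actually positive (rather than only positive up to mirror), using that the chirally cosmetic surgery property is preserved under mirroring since $S^3_{\overline{K}}(r)\cong -S^3_K(-r)$.
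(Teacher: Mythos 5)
Your proposal is correct and matches the paper's (implicit) argument exactly: the corollary is obtained by verifying hypotheses (i)--(iv) via the proposition on positive knots, (v) via Ozsv\'ath--Szab\'o's thinness result for alternating knots, and (vi) via the fact that an alternating L-space knot is a $(2,k)$-torus knot, then quoting Theorem \ref{theorem:main-obst}. Your extra remark about reducing to the positive representative via mirroring is a reasonable point of care that the paper itself leaves implicit.
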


\section{Positive two bridge knot and chirally cosmetic surgery constraints}
\label{section:2-bridge}

\subsection{Conway's normal form}
For $g>0$ and $b_1,\ldots,b_g,c_1,\ldots,c_g \in \Z\setminus\{0\}$, we denote by $C[2b_g,2c_g,\ldots,2b_1,2c_1]$ the diagram (or, a knot by abuse of notation) as depicted in Figure \ref{fig:2-bridge} (i).

\begin{figure}[htbp]
\includegraphics*[width=105mm]{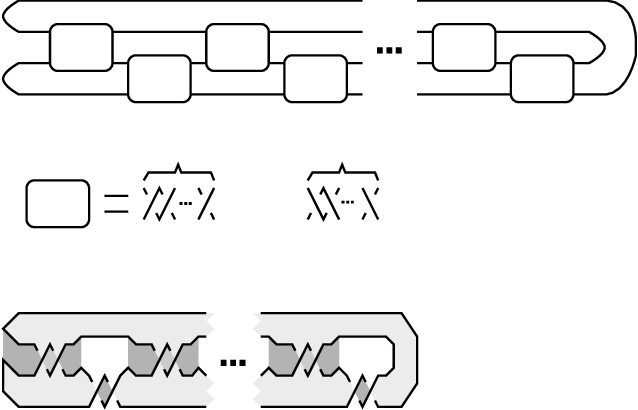}
\begin{picture}(0,0)
\put(-320,185){(i)}
\put(-320,65){(ii)}
\put(-270,166){$2b_g$}
\put(-240,152){$-2c_g$}
\put(-200,166){$2b_{g\!-\!1}$}
\put(-169,152){$-2c_{g\!-\!1}$}
\put(-90,166){$2b_1$}
\put(-60,152){$-2c_1$}
\put(-278,93){$k$}
\put(-198,93){($k>0$)}
\put(-168,89){\Large ,}
\put(-118,93){($k<0$)}
\put(-220,118){$k$}
\put(-148,118){$-k$}
\end{picture}
\caption{(i) Diagram $C[2b_g,2c_g,\ldots,2b_1,2c_1]$. (ii) The Seifert surface $S$} 
\label{fig:2-bridge}
\end{figure} 

It is known that a two-bridge knot $K$ of genus $g$ is represented by the diagram $C[2b_g,2c_g,\ldots,2b_1,2c_1]$ (see \cite[12.A]{bz}, for example).
We often exploit the following symmetry  
\begin{equation}
\label{eqn:symmetry}
C[2b_g,2c_g,\ldots,2b_1, 2c_1] \cong C[-2c_{1},-2b_1,\ldots,-2c_{g},-2b_g]
\end{equation}
to reduce the computation.

By definition, $C[2b_g,2c_g,\ldots,2b_1, 2c_1]$ is a positive diagram if and only if 
$b_i>0$ and $c_i<0$ for all $i$. Conversely, every positive 2-bridge knot has a positive diagram of this form.

\begin{proposition}
Every positive two-bridge knot $K$ of genus $g$ is represented by a positive diagram $K=C[2b_g,2c_g,\ldots,2b_1,2c_1]$ ($b_i>0, c_i<0$)
\end{proposition}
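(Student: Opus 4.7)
Since the previous paragraph already establishes that a genus-$g$ two-bridge knot admits \emph{some} Conway presentation $K = C[2b_g,2c_g,\ldots,2b_1,2c_1]$ with nonzero integer entries, the content of this proposition lies in pinning down the signs. My plan is to reduce the problem to a signature computation and then invoke positivity of $K$.

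The first step is to read off an explicit Seifert matrix of $K$ from the genus-$g$ spanning surface shown in Figure~\ref{fig:2-bridge}(ii). For any choice of nonzero signs of the $b_i,c_i$, the surface has genus exactly $g$, yielding a tridiagonal $2g \times 2g$ Seifert form whose diagonal entries record (up to sign) the $b_i$ and $c_i$. A direct, if slightly tedious, calculation then shows that the signature of this quadratic form equals $+2g$ precisely when $b_i > 0$ and $c_i < 0$ for every $i$, and is strictly smaller otherwise. Alternatively, one can invoke Murasugi's explicit formula for the signature of a two-bridge knot in terms of its even continued fraction and extract the same conclusion without any case analysis.

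On the other hand, a positive two-bridge knot is special alternating (every two-bridge knot is alternating), so Murasugi's theorem gives $|\sigma(K)| = 2g(K) = 2g$, and the convention $\sigma_\omega(K) \geq 0$ for positive $K$ adopted in this paper forces $\sigma(K) = +2g$. Matching this against the signature computation above pins down $b_i > 0$ and $c_i < 0$, and since by the remark just preceding the proposition this sign pattern is exactly what makes the diagram $C[2b_g,2c_g,\ldots,2b_1,2c_1]$ positive, the claim follows. The main obstacle is the signature computation for arbitrary sign patterns; appealing to Murasugi's closed-form signature formula, rather than performing the tridiagonal eigenvalue analysis by hand, is the cleanest way to discharge it.
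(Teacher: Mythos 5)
Your proposal is correct and follows essentially the same route as the paper: the paper also fixes the signs by observing that positivity forces $\sigma(K)=2g(K)$, so the symmetrized Seifert form of the plumbed surface coming from the Conway form must be positive definite, and a band with the wrong sign would give a core curve of negative self-pairing. Your fuller signature characterization (or appeal to Murasugi's continued-fraction formula) is just a more computational packaging of this same key step, so no substantive difference.
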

\begin{proof}
From the diagram $C[2b_g,2c_g,\ldots,2b_1,2c_1]$, we have a Seifert surface $S$ of genus $g$ that is the plumbing of the $2g$ annuli $A_{-c_1}, A_{b_1}, \ldots, A_{-c_g}, A_{b_g}$ (Figure \ref{fig:2-bridge} (ii)).

Let $\langle \; ,\; \rangle_S$ be the Seifert form of $S$. 
If $K$ is positive, the Seifert form of $S$ is positive definite because $\sigma(\langle\;, \; \rangle_S) = \sigma(K)=2g(K)$.
If $b_i<0$ or $c_i>0$ for some $i$, then the core curve $\gamma \subset S$ of the annulus $A_{b_i}$ or $A_{-c_i}$ satisfies $\langle \gamma,\gamma \rangle_S <0$. Therefore the diagram $C[2b_g,2c_g,\ldots,2b_1,2c_1]$ cannot represent a positive knot. Since every 2-bridge knot of genus $g$ is represented by a diagram $C[2b_g,2c_g,\ldots,2b_1,2c_1]$, this shows that a positive two-bridge knot of genus $g$ is always represented by a diagram $C[2b_g,2c_g,\ldots,2b_1,2c_1]$ with $b_i>0$ and $c_i<0$ for all $i$. 
\end{proof}

For $j=1,\ldots,g$, we define the \emph{$j$-th truncate of $K$} by
\begin{equation}
\label{eqn:truncate} K_j = C[2b_{j},2c_j,\ldots, 2b_1,2c_1]. 
\end{equation}
We define $d_j,p_j>0$ by the continued fraction
\begin{equation}
\label{eqn:continued-fraction}
\frac{d_j}{p_j} = 2b_j+\cfrac{1}{2c_j+\cfrac{1}{2b_{j-1} + \cfrac{1}{\ddots +\cfrac{1}{2c_1}} }} 
\end{equation}
By formally putting $d_0=1$ and $p_0=0$, for $i=0,\ldots,g-1$, $d_i$ and $p_i$ satisfies the following recurrence relations.
\begin{equation}
\label{eqn:recurrence}
\begin{cases}
d_{i+1} = (-4b_{i+1}c_{i+1}-1)d_{i} - 2b_{i+1}p_i \\
p_{i+1} = -2c_{i+1}d_i - p_i
\end{cases}
\end{equation}
In particular, it follows that $d_{i}>p_{i}>0$ and that $d_i$ and $p_i$ are coprime.
It is known that $d_{j}=\det(K_j)$. In particular, $\det(K)=\det(K_g)=d_g$.

By applying the Gauss diagram formula of $a_2$ \cite[Theorem 1]{pv} for the diagram $C[2b_g,2c_g,\ldots,2b_1,2c_1]$ we immediately get the following (c.f. \cite[Propositino 4.1]{iw}).

\begin{proposition}\label{prop:a2-formula}
If $K=C[2b_g,2c_g,\ldots,2b_1,2c_1]$ then
\[ a_2(K)= \sum_{i=1}^{g}\sum_{j=1}^{i}-b_ic_j\]
\end{proposition}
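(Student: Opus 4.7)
The plan is to invoke the Polyak--Viro Gauss diagram formula for $a_2$ \cite{pv} and apply it directly to the diagram $D = C[2b_g, 2c_g, \ldots, 2b_1, 2c_1]$. That formula expresses $a_2(K)$ as a signed count over pairs of arrows in the Gauss diagram $G_D$ whose endpoints interleave on the base circle in a prescribed orientation pattern, each qualifying pair contributing the product of the two crossing signs. Since $D$ is positive, every crossing contributes sign $+1$, so the problem reduces to a purely combinatorial count of qualifying chord pairs in $G_D$.

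The crossings of $D$ split into $2g$ twist regions: $2b_i$ crossings in the $b_i$-region and $2|c_i|$ crossings in the $c_i$-region. The first step is to trace the knot around $D$ and record the cyclic order of the $2(\sum 2b_i - \sum 2c_i)$ arrow endpoints on the base circle of $G_D$. Using the structural symmetry \eqref{eqn:symmetry}, one can cut the bookkeeping roughly in half.

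Given that layout, I would establish three local claims about the Gauss diagram: (a) two crossings lying in the same twist region never produce a pair matching the Polyak--Viro pattern; (b) pairs coming from two distinct $b$-regions, or from two distinct $c$-regions, also contribute nothing; (c) a pair with one crossing in the $b_i$-region and the other in the $c_j$-region matches the pattern precisely when $j \le i$, and the total net contribution of all such pairs across these two regions equals $-b_i c_j$. Summing the contributions in (c) over $1 \le j \le i \le g$ yields the claimed formula.

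The main obstacle is the combinatorial verification of (c): one must carefully follow the two strands of $D$ through each twist region to determine how the $2b_i + 2|c_j|$ endpoints attached to the $b_i$-- and $c_j$--regions interleave on the base circle, and then apply Polyak--Viro's orientation convention to obtain the exact count $-b_i c_j$ rather than the naive "all pairs link" count $4 b_i |c_j|$. The parallel computation in \cite[Proposition 4.1]{iw} for essentially the same invariant serves as a useful cross-check, and in fact the statement here is little more than a translation of that result into the $(b_i,c_i)$ conventions adopted in this paper.
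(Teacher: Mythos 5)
Your approach is essentially the paper's own: the paper offers no argument beyond applying the Polyak--Viro Gauss diagram formula \cite[Theorem 1]{pv} to the diagram $C[2b_g,2c_g,\ldots,2b_1,2c_1]$ (cf.\ \cite[Proposition 4.1]{iw}), and your interleaving count of chord pairs between the $b_i$- and $c_j$-regions is precisely the verification it leaves implicit. One small caution: the proposition as stated does not assume $b_i>0$, $c_i<0$, so rather than declaring all crossing signs $+1$ you should carry the signs $\operatorname{sgn}(b_i)$ and $-\operatorname{sgn}(c_j)$ through the count; the interleaving pattern is unaffected and the signed count still yields $-b_ic_j$ for each pair $j\le i$.
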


Similarly, by applying the Gauss diagram formula of $4v_3(K)$ \cite[Theorem 2]{pv}
we get the following (c.f. \cite[Proposition 4.4]{iw}).

\begin{proposition}\label{prop:v3-formula}
If $K=C[2b_g,2c_g,\ldots,2b_1,2c_1]$ then
\[ 4v_3(K)= \frac{1}{2}\left( \sum_{i=1}^{g}\sum_{j=1}^{i}-b_i^{2}c_j + \sum_{i=1}^{g}\sum_{j=1}^{i}b_ic_j^{2} \right) \]
\end{proposition}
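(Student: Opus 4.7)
The plan is to apply the Polyak--Viro Gauss diagram formula for $4v_3$ \cite[Theorem 2]{pv} directly to the diagram $C[2b_g,2c_g,\ldots,2b_1,2c_1]$, in complete parallel with the derivation of Proposition \ref{prop:a2-formula}. Since $b_i>0$ and $c_i<0$, every crossing is positive, so all chords of the Gauss diagram carry writhe sign $+1$ and the sign bookkeeping reduces to reading off the cyclic order of endpoints. I would begin by labeling the chords of the Gauss diagram by a pair $(i,\alpha)$, where $i\in\{1,\ldots,g\}$ records the twist region and $\alpha$ records the position of the chord inside that region; a $b_i$-region contributes $2b_i$ parallel chords and a $c_i$-region contributes $2|c_i|$ parallel chords, and the cyclic arrangement of their endpoints along the Wirtinger loop is exactly what Conway's form prescribes.

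Next, I would list the arrow configurations of three chords appearing in the Polyak--Viro formula for $4v_3$ and classify ordered triples of chords of our Gauss diagram according to how the three twist-region indices $(i,j,k)$ are distributed: (a) all three chords lie in the same twist region, (b) two chords lie in one region and the third in a different one, and (c) all three chords lie in distinct regions. A key simplification is that for parallel chords within the same twist region the Polyak--Viro pattern either never matches or matches in a way that cancels in pairs, so case (a) and the within-region part of case (b) drop out. In case (c), the linear order of the twist regions along the Conway diagram forces only very restricted sub-cases (essentially the ones where the three regions are nested in a specific way) to contribute.

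After that, I would compute the contribution of each surviving case. Each contribution factors as a number of chord endpoints to choose in each region, giving a product of the form $b_i^2 c_j$, $b_i c_j^2$, $b_i b_j c_k$, or $b_i c_j c_k$. The products of three distinct parameters should cancel between terms coming from swapping the roles of the two regions not carrying the squared factor, leaving only the diagonal-type contributions. Collecting what survives and comparing with the Polyak--Viro signs yields exactly
\[ 4v_3(K)=\frac{1}{2}\left(\sum_{i=1}^{g}\sum_{j=1}^{i}-b_i^2 c_j+\sum_{i=1}^{g}\sum_{j=1}^{i} b_i c_j^2\right), \]
with the overall factor $\tfrac12$ coming from the normalization of $4v_3$ used in \cite{pv}.

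The main obstacle is the case analysis in step two: verifying which of the Polyak--Viro three-chord patterns are actually realized by chords of a Conway diagram and checking that the triple-product terms $b_ib_jc_k$ and $b_ic_jc_k$ cancel among themselves. This is elementary but tedious; the computation is essentially the one carried out in \cite[Proposition 4.4]{iw} for the closely related pretzel/alternating setting, and the same bookkeeping applies here with only cosmetic changes because Conway's form differs from theirs only in the cyclic placement of the twist regions.
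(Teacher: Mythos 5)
Your plan is essentially the paper's own proof: the paper simply applies the Polyak--Viro Gauss diagram formula for $4v_3$ \cite[Theorem 2]{pv} to the Conway-form diagram and refers to the analogous bookkeeping in \cite[Proposition 4.4]{iw}, giving no more detail than you propose. One small caveat: the proposition is stated for arbitrary nonzero $b_i,c_i$, so strictly you should carry the crossing signs through the Gauss-diagram count rather than assuming all crossings are positive (though only the positive case is used later in the paper).
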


For later use, we observe the following inequality.
\begin{lemma}
\label{lemma:1st-estimate}
Let $K=C[2b_g,2c_g,\ldots, 2b_1,2c_1]$ be a positive $2$-bridge knot.
Then 
\[ \det(K) \geq \left(\sum_{i=1}^{g}-2b_ic_i\right) +1 \] 
\end{lemma}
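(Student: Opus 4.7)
The plan is to prove the inequality by induction on $j$, establishing the intermediate statement $d_j \geq 1 + S_j$ where $S_j := \sum_{i=1}^{j}(-2b_ic_i)$, using only the recurrence relations \eqref{eqn:recurrence} together with the elementary integer bound $p_j \leq d_j - 1$ (which follows from $0 \leq p_j < d_j$, with the convention $p_0 = 0$, $d_0 = 1$). The base case $j=0$ is immediate, since $d_0 = 1 = 1 + S_0$, and the conclusion of the lemma is the case $j = g$.

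For the inductive step I set $\alpha = -b_{j+1}c_{j+1} \geq 1$ and $\beta = b_{j+1} \geq 1$, so that $\alpha \geq \beta$ (because $-c_{j+1} \geq 1$), and rewrite the recurrence as
\[ d_{j+1} = (4\alpha - 1) d_j - 2\beta p_j. \]
The key move is to replace $p_j$ by its largest possible value $d_j - 1$ \emph{first}, obtaining the cruder lower bound
\[ d_{j+1} \geq (4\alpha - 1 - 2\beta) d_j + 2\beta. \]
Because $4\alpha - 1 - 2\beta \geq 2\beta - 1 \geq 1 > 0$, the coefficient of $d_j$ is positive, so I can then legitimately substitute the inductive hypothesis $d_j \geq 1 + S_j$ to get
\[ d_{j+1} \geq (4\alpha - 1 - 2\beta)(1 + S_j) + 2\beta. \]

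The remaining step is a short algebraic check that the right-hand side is at least $1 + S_{j+1} = 1 + S_j + 2\alpha$; expanding and simplifying, the difference reduces to $2(2\alpha - 1 - \beta) S_j + 2(\alpha - 1)$, and both factors are nonnegative under $\alpha \geq \beta \geq 1$. The main obstacle in the argument is the sign management: the recurrence contains one positive and one negative coefficient, so one must bound $p_j$ from above \textbf{before} invoking the inductive hypothesis on $d_j$, and one must verify that the resulting coefficient of $d_j$ remains positive so that the hypothesis can be substituted in the correct direction. As a sanity check, both nonnegativity inequalities above collapse to equalities precisely when $b_i = 1$ and $c_i = -1$ for every $i$, that is, when $K$ is the $(2, 2g+1)$-torus knot, matching the expectation that this exceptional family should be the only case of equality.
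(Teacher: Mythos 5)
Your proof is correct and follows essentially the same route as the paper: an induction along the truncates using the recurrence \eqref{eqn:recurrence} together with the bound $0\leq p_j\leq d_j-1$; the paper just packages the same algebra by grouping the terms as $(-4b_gc_g-2b_g-1)d_{g-1}+2b_g(d_{g-1}-p_{g-1})$ instead of substituting $p_j\leq d_j-1$ first. No gaps.
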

\begin{proof}
The proof is induction on $g$. For $g=1$, $\det(K)=-4b_1c_1-1 \geq -2b_1c_1 +1$.
For general case, since $d_g \geq p_g+1$, 
\begin{align*}
d_g-p_g &=(-4b_{g}c_g +2c_{g} -1)d_{g-1} + (-2b_g+1)p_{g-1} \\
&\geq (-4b_gc_g -2b_g +2c_g)p_{g-1} +(-4b_{g}c_g +2c_{g} -1)\\
&\geq (-4b_{g}c_g +2c_{g} -1) \\
&\geq -c_g
\end{align*}
Thus by induction
\begin{align*}
\det(K) &= (-4b_{g}c_g-1)d_{g-1} - 2b_gp_{g-1}\\
&= (-4b_{g}c_g-2b_g-1)d_{g-1} + 2b_g(d_{g-1}-p_{g-1})\\
& \geq d_{g-1} + (-2b_gc_g) = \left(\sum_{i=1}^{g}-2b_ic_i\right) +1
\end{align*}
\end{proof}

\subsection{Signature of torus knots}

For the $(2,2g+1)$-torus knot $T_{2,2g+1}$, its total $p$-signature $\sigma(T_{2,2g+1},p)$ is equal to the (ordinary) signature of the $(2g+1,p)$ torus link because they are equal to the signature of the $4$-manifold $X_{2,2g+1,p} = \{(z_1,z_2,z_3 \in \C^{3})\: | \: z_1^{2}+z_2^{2g+1}+z_3^{p} = \varepsilon, |z_1|^2 + |z_2|^2+|z_{3}|^2 \leq 1\}$ bounding the Brieskorn homology 3-sphere (here $\varepsilon>0$ is sufficiently small real number).
From the recurrence formula of the signature of torus knot \cite[Theorem 5.2]{glm}, we have the following estimate of the signature. 

\begin{lemma}
\label{lemma:signature-torus-knot}
Let
\[ a(q) = \begin{cases} 1 & q \mbox{:odd} \\ 2 & q \mbox{:even} \end{cases} \mbox{ and } \quad  b(q,n) = \begin{cases} \frac{1}{2} & q \mbox{:odd and } n \mbox{:even}  \\ 0 & \mbox{otherwise} \end{cases}\]
For the $(q,n)$ torus link $T_{q,n}$ with $1 \leq q \leq n$, 
\begin{equation}
\label{eqn:sgn}
\frac{(q-1)n}{2} \leq \sigma(T_{q,n}) \leq \frac{q(n+1)}{2} - a(q)-b(q,n)
\end{equation}
holds.
\end{lemma}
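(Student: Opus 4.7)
The plan is to prove both inequalities in \eqref{eqn:sgn} simultaneously by induction on $n$ for $q$ fixed, using the recurrence formula from \cite[Theorem 5.2]{glm}. That recurrence expresses $\sigma(T_{q, n+2q})$ as $\sigma(T_{q,n}) + (q^{2} - \varepsilon_{q,n})$, where $\varepsilon_{q,n} \in \{0,1,2\}$ depends only on the parities of $q$ and $n$. On the bound side, under $n \mapsto n + 2q$ the lower bound $\frac{(q-1)n}{2}$ grows by exactly $q(q-1)$, while the upper bound $\frac{q(n+1)}{2} - a(q) - b(q,n)$ grows by exactly $q^{2}$: here $a(q)$ is independent of $n$, and $b(q,n)$ depends only on the parity of $n$, which is unchanged by the shift $n \mapsto n+2q$. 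Hence both inequalities propagate along the recurrence as soon as $0 \leq \varepsilon_{q,n} \leq q$, which is automatic once $q \geq 2$.

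For the base I would verify \eqref{eqn:sgn} on the small values of $n$ that the recurrence cannot reduce further, roughly the window $q \leq n < 3q$ (using the symmetry $T_{q,n} = T_{n,q}$ to stay in the regime $n \geq q$). In this range the signature can be read off directly, either from the Brieskorn quadratic form description recalled in the paragraph just before the lemma, or from Murasugi's classical closed-form formulas for $\sigma(T_{q,n})$. The verification then reduces to a finite and entirely elementary case analysis parametrized by $(q \bmod 2,\, n \bmod 2)$ together with the residue of $n$ modulo $q$.

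Given the base case, the inductive step is immediate from the matching-increment observation above. The main obstacle I anticipate is the parity bookkeeping: the precise value of $\varepsilon_{q,n}$ produced by the GLM recurrence must be matched against the correction terms $a(q)$ and $b(q,n)$ in each of the four parity cases, and the base cases must be checked tightly enough that the upper bound in \eqref{eqn:sgn} is actually attained rather than merely weakly satisfied, so that no slack accumulates over successive applications of the recurrence. Nothing in the argument is conceptually deep; the real work is in organizing the parity cases cleanly and tracking the constants.
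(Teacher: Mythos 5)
Your inductive step for $n \ge 3q$ is fine and matches the paper's Case 5: under $n \mapsto n+2q$ the GLM recurrence adds $q^{2}+a(q)-2$ to the signature, the lower bound grows by $q(q-1)$, the upper bound grows by $q^{2}$ (since $b(q,n+2q)=b(q,n)$), and $0 \le 2-a(q) \le q$, so both inequalities propagate. (Minor slip: the defect $\varepsilon_{q,n}=2-a(q)$ lies in $\{0,1\}$ and depends only on the parity of $q$, not of $n$; this does not affect the argument. Also your worry about ``no slack accumulating'' is unfounded: the increments only increase the slack on both sides.)

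The genuine gap is the base case. The window $q \le n < 3q$ is not a finite collection: it is an infinite family indexed by $q$, and there is no closed-form expression for $\sigma(T_{q,n})$ there that depends only on $(q \bmod 2,\, n \bmod 2)$ and $n \bmod q$. Murasugi's and Gordon--Litherland--Murasugi's formulas in this range are themselves recursive: for $q \le n < 2q$ one has $\sigma(T_{q,n}) = q^{2}-a(q)-\sigma(T_{q,2q-n})$ with $2q-n<q$, and for $2q<n<3q$ the recurrence lands at $T_{q,n-2q}$ with $n-2q<q$, so in both cases one must pass to a torus link with strictly smaller first parameter. Verifying the bounds in your ``base window'' therefore requires an induction on $q$ (the paper runs a lexicographic induction on $(q,n)$, using the swap $T_{q,m}=T_{m,q}$ together with GLM (II), (III)), and this is where essentially all the work of the lemma sits: one must track how $a(q)$ versus $a(2q-n)$ or $a(n-2q)$ enter, and in the case $q$ odd, $n$ even one needs the integrality of $\sigma$ to gain the extra $\tfrac12$ in the upper bound. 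The Brieskorn/Milnor-fiber description mentioned before the lemma does not give a direct evaluation either; it leads to a lattice-point count, again not a finite check over the window. So as written, your proposal defers the core of the proof to a ``direct verification'' that is not available, and it needs to be replaced by the double induction (or an equivalent explicit computation valid for all $q$) in the range $q \le n < 3q$.
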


\begin{proof}
We prove the assertion by induction on $(q,n)$.
Assume that we have proven \eqref{eqn:sgn} for all $(q',n')$ with $(q',n')<(q,n)$ where $<$ is the lexicographical ordering. The case $q=1$ is obvious so in the following we assume that $q\geq 2$.\\

\noindent
{\bf Case 1. $q=n$}\\

By \cite[Theorem 5.2 (III)]{glm} $\sigma(T_{q,q}) =\frac{1}{2}( q^{2}-a(q))$ hence
\[ \frac{(q-1)q}{2} \leq \sigma(T_{q,q})= \frac{1}{2}( q^{2}-a(q)) \leq \frac{q(q+1)}{2} - a(q) -b(q,q) \]

\noindent
{\bf Case 2. $q < n < 2q$}\\

By \cite[Theorem 5.2 (III)]{glm}
\[ \sigma(T_{q,n}) = q^{2}-a(q) -\sigma(T_{q,2q-n})\]
Since $2q-n < q$ and $T_{q,2q-n}=T_{2q-n,q}$, by induction
\[ \frac{(2q-n-1)q}{2} \leq \sigma(T_{2q-n,q}) \leq \frac{(2q-n)(q+1)}{2}-a(2q-n)-b(2q-n,q) \]

First we prove the upper bound.
\begin{align*}
\sigma(T_{q,n}) &\leq q^{2}-a(q)-\frac{(2q-n-1)q}{2}=  \frac{q(n+1)}{2} - a(q)\\
\end{align*}
When $b(q,n)=0$, we have done. If $b(q,n)=\frac{1}{2}$, namely, $q$ is odd and $n$ is even, since $\sigma(T_{q,n})$ is an integer but $\frac{q(n+1)}{2}$ is not an integer,
\begin{align*}
\sigma(T_{q,n}) \leq \frac{q(n+1)}{2} - a(q) - \frac{1}{2} = \frac{q(n+1)}{2} - a(q)-b(q,n)
\end{align*}
holds.

For the lower bound, since $n-q \geq 1$
\begin{align*}
\sigma(T_{n,q}) & \geq q^{2}-a(q)-\frac{(2q-n)(q+1)}{2}+a(2q-n)+b(2q-n,q) \\
&= \frac{nq-2q+n}{2} -a(q)+a(2q-n)+b(2q-n,q) \\
&\geq \frac{(q-1)n}{2} + (n-q)-a(q)+a(2q-n)+b(2q-n,q)\\
&\geq \frac{(q-1)n}{2} 
\end{align*}

\noindent
{\bf Case 3. $n=2q$}\\

By \cite[Theorem 5.2 (II)]{glm} $\sigma(T_{q,2q})=q^{2}-1$ hence
\[ \frac{(q-1)(2q)}{2} \leq \sigma(T_{q,2q})=q^{2}-1 \leq \frac{q(2q+1)}{2} - a(q)-b(q,2q)\]

\noindent
{\bf Case 4. $2q < n < 3q$}\\

By \cite[Theorem 5.2 (I)]{glm}
\[ \sigma(T_{q,n}) =\sigma(T_{q,n-2q}) + q^{2} +a(q)-2 \]
Since $T_{q,n-2q}=T_{n-2q,q}$ and $0 < n-2q < q$, by induction
\[  \frac{(n-2q-1)q}{2} \leq \sigma(T_{n-2q,q}) \leq \frac{(n-2q)(q+1)}{2} -a(n-2q)-b(n-2q,q) \]
Therefore
\begin{align*}
\sigma(T_{q,n}) 
&\leq \frac{(n-2q)(q+1)}{2} - a(n-2q)-b(n-2q,q)+ q^{2} + a(q) -2 \\
&\leq \frac{nq+n-2q}{2}-a(n-2q)-b(n-2q,q)+a(q)-2
\end{align*}
If $n\leq 3q-2$, $n-2q \leq q-2$ so
\begin{align*}
\frac{nq+n-2q}{2}-a(n-2q)+a(q)-2 & \leq \frac{q(n+1)}{2}-1-a(n-2q)-b(n-2q,q)+a(q)-2 \\
& \leq \frac{q(n+1)}{2}-a(q)
\end{align*}
If $n=3q-1$, $n \not \equiv q \pmod 2$. Thus
\[ \frac{nq+n-2q}{2}-a(n-2q)-b(n-2q,q)+a(q)-2 = \frac{q(n+1)}{2}-\frac{1}{2}-2-0+1-2 \leq \frac{q(n+1)}{2} -1 \]
when $q$ is odd, and 
\[ \frac{nq+n-2q}{2}-a(n-2q)-b(n-2q,q)+a(q)-2 = \frac{q(n+1)}{2}-\frac{1}{2}-1-\frac{1}{2}+2-2 \leq \frac{q(n+1)}{2} -2 \]
when $q$ is even.

For the lower bounds,
\begin{align*}
\sigma(T_{q,n})
&\geq \frac{(n-2q-1)q}{2}+q^{2} +a(q)-2 \\
&= \frac{(n-1)q}{2} + a(q) -2 \\
&\geq \frac{(n-1)q}{2}
\end{align*}

\noindent
{\bf Case 5. $n \geq 3q$}\\

By \cite[Theorem 5.2 (I)]{glm}
\[ \sigma(T_{q,n}) =\sigma(T_{q,n-2q}) + q^{2} +a(q)-2. \]
Since $q \leq n-2q$, by induction
\[  \frac{(q-1)(n-2q)}{2} \leq \sigma(T_{q,n-2q}) \leq \frac{q(n-2q+1)}{2} - a(q) - b(q,n-2q) \]
\begin{align*}
\sigma(T_{n,q}) 
&\leq \frac{q(n-2q+1)}{2} - a(q) -b(q,n-2q)+ q^{2} + a(q) -2 \\
&\leq \frac{q(n+1)}{2}-b(q,n)-2 \\
&\leq \frac{q(n+1)}{2}-a(q)-b(q,n)
\end{align*}
and
\begin{align*}
\sigma(T_{n,q}) 
&\geq \frac{(q-1)(n-2q)}{2}+q^{2} +a(q)-2 \\
&= \frac{(q-1)n}{2} +q + a(q) -2 \\
&\geq \frac{(q-1)n}{2}
\end{align*}

\end{proof}

\subsection{Main obstruction}

Combining Theorem \ref{theorem:main-obst} and Lemma \ref{lemma:signature-torus-knot} we get the following main obstruction for a positive two-bridge knot to admit no chirally cosmetic surgeries.

\begin{corollary}
\label{cor:main-obstruction}
Let $K$ be a  positive $2$-bridge knot other than the $(2,k)$ torus knot.
 If 
\begin{equation}
\label{eqn:main-obstruction} \det(K) + 6g(K)-5 >
\begin{cases}
 \displaystyle \frac{176a_2(K)}{10g(K)} & (g\geq 6) \\
\displaystyle \frac{16a_2(K)}{g(K)} & (g\leq 5) \\
\end{cases}
\end{equation}
Then $K$ has no chirally cosmetic surgeries.
\end{corollary}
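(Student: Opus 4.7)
The strategy is direct: apply Corollary~\ref{cor:main-obst} to $K$ and convert the resulting upper bound on $\det(K)+6g(K)-5$ into the explicit form in~\eqref{eqn:main-obstruction} by supplying lower bounds on the signature density $\phi^{\ge 11}(K)$ coming from Lemma~\ref{lemma:signature-torus-knot}. Since a positive 2-bridge knot other than a $(2,k)$-torus knot is special alternating (but is not itself a $(2,k)$-torus knot), Corollary~\ref{cor:main-obst} applies and yields
\[ \det(K)+6g(K)-5 \;\le\; \frac{16\,a_2(K)}{\phi^{\ge 11}(K)\,g(K)} \]
whenever $K$ admits chirally cosmetic surgeries. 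Proving the contrapositive therefore reduces to showing $\phi^{\ge 11}(K)\ge 1$ when $g(K)\le 5$ and $\phi^{\ge 11}(K)\ge \tfrac{10}{11}$ when $g(K)\ge 6$, since these turn the right-hand side into exactly $\tfrac{16a_2(K)}{g(K)}$ and $\tfrac{176a_2(K)}{10g(K)}$ respectively.

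The key input is a pointwise signature comparison between $K$ and the $(2,2g+1)$-torus knot of the same genus, namely $\sigma(K,p)\ge \sigma(T_{2,2g+1},p)=\sigma(T_{2g+1,p})$ for all $p\ge 2$, where the second equality is the identification recalled immediately before Lemma~\ref{lemma:signature-torus-knot}. Granted this, two applications of Lemma~\ref{lemma:signature-torus-knot} furnish the required torus-link estimates: for $p\ge 2g+1$, taking $(q,n)=(2g+1,p)$ yields $\sigma(T_{2g+1,p})\ge g\,p$; for $11\le p\le 2g+1$, taking $(q,n)=(p,2g+1)$ yields $\sigma(T_{p,2g+1})\ge (p-1)(2g+1)/2$.

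A simple case analysis now finishes the job. For $g(K)\le 5$ every $p\ge 11$ satisfies $p\ge 2g+1$, and the first bound gives $|\sigma(K,p)|/(g(K)p)\ge 1$, whence $\phi^{\ge 11}(K)\ge 1$. For $g(K)\ge 6$ the first bound still gives $\ge 1>\tfrac{10}{11}$ whenever $p\ge 2g+1$, while in the complementary range $11\le p<2g+1$ the second bound gives
\[ \frac{|\sigma(K,p)|}{g(K)\,p}\;\ge\;\frac{(p-1)(2g+1)}{2\,g(K)\,p}, \]
a quantity strictly increasing in $p$ and equal to $\tfrac{10}{11}+\tfrac{5}{11\,g(K)}>\tfrac{10}{11}$ at the minimal value $p=11$, hence $\phi^{\ge 11}(K)\ge \tfrac{10}{11}$.

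The main obstacle is justifying the pointwise signature comparison $\sigma(K,p)\ge \sigma(T_{2,2g+1},p)$. It is plausible because $T_{2,2g+1}$ corresponds to the minimal choice $b_i=-c_i=1$ in Conway's form of a positive 2-bridge knot of genus $g$, so increasing any $b_i$ or $|c_i|$ is expected to enlarge the Hermitian form $(1-\omega)V+(1-\bar\omega)V^{T}$ and hence its signature; formalising this should proceed by an eigenvalue interlacing argument on the positive-definite symmetrised Seifert matrix $V+V^{T}$ of the positive 2-bridge knot.
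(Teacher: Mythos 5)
Your overall route is the same as the paper's: feed Corollary \ref{cor:main-obst} (equivalently Theorem \ref{theorem:main-obst}) with a lower bound on $\phi^{\geq 11}(K)$ obtained by comparing $\sigma(K,p)$ with $\sigma(T_{2,2g+1},p)$ and then invoking Lemma \ref{lemma:signature-torus-knot}. Your numerical part is correct and in fact slightly more explicit than the paper's: for $p\geq 2g+1$ the choice $(q,n)=(2g+1,p)$ gives ratio $\geq 1$, and for $11\leq p<2g+1$ the choice $(q,n)=(p,2g+1)$ gives $\frac{(p-1)(2g+1)}{2gp}\geq \frac{10}{11}+\frac{5}{11g}>\frac{10}{11}$, which is exactly how the paper gets $\phi^{\geq 11}(T_{2,2g+1})\geq 1$ for $g\leq 5$ and $\geq \frac{10}{11}$ for $g\geq 6$.

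The genuine gap is the step you yourself flag: the pointwise comparison $\sigma(K,p)\geq \sigma(T_{2,2g+1},p)$ is asserted as ``plausible'' and only sketched via an interlacing heuristic, so as written the proof is incomplete at its crucial point. The paper closes this with a one-line standard fact: for every $\omega$ on the unit circle, a positive-to-negative crossing change does not increase $\sigma_{\omega}$ (with the paper's sign convention), and Conway's normal form $C[2b_g,2c_g,\ldots,2b_1,2c_1]$ with $b_i>0$, $c_i<0$ visibly converts to $T_{2,2g+1}$ by such crossing changes; summing over $p$-th roots of unity gives $\sigma(K,p)\geq \sigma(T_{2,2g+1},p)$ and hence $\phi^{\geq 11}(K)\geq \phi^{\geq 11}(T_{2,2g+1})$ (both knots have genus $g$, so the denominators agree). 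Your Seifert-matrix idea can also be made rigorous --- with respect to the plumbing surface the Seifert matrices of $K$ and $T_{2,2g+1}$ differ by a nonnegative diagonal matrix, so the Hermitian forms $(1-\omega)V+(1-\bar\omega)V^{T}$ differ by a positive semidefinite perturbation and Weyl monotonicity (not interlacing of $V+V^{T}$) gives the inequality, with some care needed at those $\omega$ where the form is singular, i.e.\ at roots of the Alexander polynomial among the $p$-th roots of unity --- but you would need to actually carry this out; as submitted, the key lemma is unproven.
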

In the following we call the inequality \eqref{eqn:main-obstruction} the \emph{main obstruction}.

\begin{proof}
For each $\omega \in \{z \in \C \: | \: |z|=1\}$, if a knot $K$ is obtained from $K'$ by positive-to-negative crossing changes, then $\sigma_{\omega}(K) \leq \sigma_{\omega}(K')$ holds. From the Conway's normal form we see that every positive two bridge knot $K$ of genus $g$ can be converted to the $(2,2g+1)$ torus knot $T_{2,2g+1}$ by the positive-to-negative crossing changes. Thus
\begin{equation}
\label{eqn:sig-bound} \phi^{\geq N}(T_{2,2g+1}) \leq  \phi^{\geq N}(K) 
\end{equation}
holds for every $N\geq 0$.

By Lemma \ref{lemma:signature-torus-knot} for $p \geq 11$,
\[ \frac{\sigma(T_{2,2g+1},p)}{pg} \geq \frac{(2g+1)(p-1)}{2pg} \geq \frac{10}{11} \]
if $2g+1 > p$ and 
\[ \frac{\sigma(T_{2,2g+1},p)}{pg} \geq \frac{2gp}{2pg}= 1 \]
if $2g+1 \leq p$.
When $g\leq 5$, since $p \geq 11 \geq 2g+1$, the former case does not happen.
Therefore
\[ \phi^{\geq 11}(T_{2,2g+1}) \geq
\begin{cases}
\frac{10}{11} & g\geq 6 \\
1 & g\leq 5
\end{cases}
\]
Thus Theorem \ref{theorem:main-obst} and \eqref{eqn:sig-bound} give the desired constraint.
\end{proof}

The proof actually tells us that the main obstruction \eqref{eqn:main-obstruction} is valid for a special alternating knot $K$ of genus $g$, if $K$ be converted to the $(2,2g(K)+1)$-torus knot by the positive-to-negative crossing changes.

More generally, we have the following variant of Theorem \ref{theorem:main-obst} that deserves to state. 
\begin{theorem}
\label{theorem:main-variant}
Assume that $K$ satisfies the following properties.
\begin{itemize}
\item[(i)] $a_2(K)>1$.
\item[(v)] $K$ is Heegaard Floer homologically thin.
\item[(vi)] $K$ is not an L-space knot.
\item[(vii)] $K$ can be converted to the $(2,2g(K)+1)$-torus knot by the positive-to-negative crossing changes.
\end{itemize}
If $K$ satisfies the inequality
\[ \det(K) + 6g(K)-5 >
\begin{cases}
 \displaystyle \frac{176a_2(K)}{10g(K)} & (g(K)\geq 6) \\
\displaystyle \frac{16a_2(K)}{g(K)} & (g(K)\leq 5) \\
\end{cases} \]
then $K$ has no chirally cosmetic surgeries.
\end{theorem}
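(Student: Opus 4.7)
The plan is to run exactly the argument that proved Corollary \ref{cor:main-obstruction}, with hypothesis (vii) now playing the role that the special structure of a positive 2-bridge diagram played there. The only real task is to confirm that all six hypotheses (i)--(vi) of Theorem \ref{theorem:main-obst} are in force under the weaker assumption set (i), (v), (vi), (vii), after which the Corollary \ref{cor:main-obstruction} computation carries over verbatim.

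Conditions (i), (v), (vi) are assumed outright. For (iii), I would use that a positive-to-negative crossing change does not increase $\sigma_{\omega}$ at any point $\omega$ of the unit circle: (vii) then yields $\sigma_{\omega}(K) \ge \sigma_{\omega}(T_{2,2g(K)+1})$ for every such $\omega$, and hence $\sigma(K,p) \ge \sigma(T_{2,2g(K)+1}, p)$, which Lemma \ref{lemma:signature-torus-knot} shows is strictly positive for all $p \ge 2$ and in particular for $p \ge 11$. For (ii) and (iv), I would read (vii) in its natural sense, namely that $K$ carries a positive diagram that converts to the standard positive diagram of $T_{2,2g(K)+1}$ by changing some subset of its crossings from positive to negative; this forces $K$ itself to be a positive knot, and the proposition following Theorem \ref{theorem:main-obst} then supplies $4v_3(K)>0$ and $g(K) = \tau(K)$.

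With all six hypotheses in place I would argue for contradiction. Assuming $K$ admits chirally cosmetic surgeries, Theorem \ref{theorem:main-obst} gives
\[
\det(K) + 6g(K) - 5 \le \frac{16 a_2(K)}{\phi^{\ge 11}(K)\, g(K)}.
\]
The same signature monotonicity yields $\phi^{\ge 11}(K) \ge \phi^{\ge 11}(T_{2,2g(K)+1})$, and the case analysis already carried out in the proof of Corollary \ref{cor:main-obstruction} (splitting on $g(K) \ge 6$ versus $g(K) \le 5$ according to whether $2g(K)+1$ can exceed $p \ge 11$) shows $\phi^{\ge 11}(T_{2,2g(K)+1}) \ge 10/11$ in the former case and $\phi^{\ge 11}(T_{2,2g(K)+1}) \ge 1$ in the latter. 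Substituting these lower bounds on the right-hand side produces exactly the right-hand side of the inequality in the statement, which contradicts the assumed strict inequality.

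The main obstacle is less any computation than the conceptual step of extracting (ii) and (iv) from (vii); once one agrees to interpret (vii) as providing a positive diagram of $K$ sitting above a positive diagram of $T_{2,2g(K)+1}$ in the positive-to-negative crossing-change partial order, the remaining content is literally the same chain of inequalities that established Corollary \ref{cor:main-obstruction}. The merit of isolating this variant is that it records the single geometric input, namely (vii), that actually drives the bound, so that the same strategy can be applied to knot families outside the 2-bridge setting.
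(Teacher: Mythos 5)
There is a genuine gap at the step where you extract hypotheses (ii) and (iv) of Theorem \ref{theorem:main-obst} from (vii). Hypothesis (vii) only says that some positive crossings in some diagram of $K$ can be switched to negative so as to produce $T_{2,2g(K)+1}$; it does not say that this diagram is a positive diagram, so it does not force $K$ to be a positive knot, and in particular it does not give $4v_3(K)>0$. This is not a removable technicality: the whole point of the variant is to drop condition (ii) of Theorem \ref{theorem:main-obst} (the statement deliberately omits it, and accordingly its conclusion contains no LMO equality), so any argument that routes through Theorem \ref{theorem:main-obst} in full --- which needs $v_3(K)\neq 0$ via Corollary \ref{cor:non-zero} to rule out $q+q'=0$, and needs \eqref{eqn:LMO2} --- is proving a strictly weaker statement than the one asserted. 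Your derivation of (iv) rests on the same unjustified positivity claim, although there (vii) does suffice by a different route.

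The paper's proof repairs exactly these two points without positivity. First, $q+q'\neq 0$ is obtained not from $v_3$ but from the Casson--Walker/Casson--Gordon identity \eqref{eqn:CG-CW-obst}, $12(q+q')a_2(K)=3\sigma(K,p)$: hypothesis (vii) gives $\sigma(K,p)\geq\sigma(T_{2,2g(K)+1},p)>0$ (your signature monotonicity step, which is fine), and (i) gives $a_2(K)>1$, so $q+q'\neq 0$. Second, (iv) follows from (vii) via the monotonicity of $\tau$ under positive-to-negative crossing changes, $g(K)=\tau(T_{2,2g(K)+1})\leq\tau(K)\leq g(K)$, not via positivity of $K$. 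With $qq'<0$ from (vi) and (v) in hand, one applies only the Heegaard Floer equality \eqref{eqn:Heegaard} together with \eqref{eqn:CG-CW-obst} to get $\frac{4a_2(K)p}{\sigma(K,p)}=\frac{1}{2}\left(\frac{1}{2}\det(K)+3g(K)-\frac{5}{2}\right)$, and then the bound $p\geq 11$ (from (i) and \cite[Corollary 2.2]{iis1}) and the estimate $\phi^{\geq 11}(K)\geq\phi^{\geq 11}(T_{2,2g(K)+1})$ finish the argument exactly as in Corollary \ref{cor:main-obstruction}; that final chain of inequalities in your write-up is correct, but the reduction preceding it must be reorganized as above.
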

\begin{proof}
Assume that $S^{3}_K(\frac{p}{q}) \cong -S^{3}_K(\frac{p}{q'})$ for $q \neq q'$. By the assumption (vi) we have $qq'<0$. The property (iii) of Theorem \ref{theorem:main-obst} follows from (vii) because $\sigma(K,p) \geq \sigma(T_{2,2g(K)+1},p)$. Thus by the constraint from Casson-Walker and the Casson-Gordon invariant \eqref{eqn:CG-CW-obst} $q+q' \neq 0$.

If a knot $K'$ is obtained from a knot $K$ by positive-to-negative crossing change, $\tau(K')\leq \tau(K)$ \cite[Corollary 1.5]{os-tau}, \cite[Corollary 3]{li}. Thus the property (iv) of Theorem \ref{theorem:main-obst} follows from the assumption (vii), because $g(K)= \tau(T_{2,2g(K)+1}) \leq \tau(K)\leq g(K)$. Therefore we can apply Heegaard Floer homology obstruction \eqref{eqn:Heegaard}. Thus by \eqref{eqn:Heegaard} and \eqref{eqn:CG-CW-obst},
\[ \frac{4a_2(K)p}{\sigma(K,p)} =  \frac{1}{2} \left(\frac{1}{2}\det(K)+ 3g(K)-\frac{5}{2} \right) \]
The rest of the arguments is the same as that of Corollary \ref{cor:main-obstruction}.
\end{proof}

\section{Non-existence of chirally cosmetic surgeries}
\label{section:proof}

In this section we prove Theorem \ref{theorem:main}.

We already have a formula of $a_2(K)$ (Proposition \ref{prop:a2-formula}), $4v_3(K)$ (Proposition \ref{prop:v3-formula}) and $\det(K)$ \eqref{eqn:continued-fraction}. Although we did not do this, it is also feasible to get a formula of $a_4(K)$.
Nevertheless, it is not easy to use them to apply Theorem \ref{theorem:main-obst} because there are many variables and the formula will be complicated.

Our proof of Theorem \ref{theorem:main} is divided into two steps.
\begin{description}
\item[Step A] We show that most positive two bridge knot satisfy the main criterion \eqref{eqn:main-obstruction}.
\item[Step B] We show the remaining cases has no chirally cosmetic surgeries by using equality part of Theorem \ref{theorem:main-obst}. Namely, we show that they violates the equality
\begin{equation}
\label{eqn:sub-obst}
\frac{2(7a_2(K)^2-a_2(K)-10a_4(K))}{4v_3(K)} = \det(K) + 6g(K)-5
\end{equation}
\end{description}

Since the knots which we need to treat in Step B is quite limited, it becomes feasible to check \eqref{eqn:sub-obst} by direct computations.

Our strategy for step A is to use induction argument.
For a positive $2$-bridge knot $K=C[2b_g,2c_g,\ldots,2b_1,2c_1]$ of genus $g$, we put
\[ K^{b_i+} = C[2b_g,2c_g,\ldots,2b_{i+1}, 2c_{i+1}, 2(b_i+1), 2c_i, \ldots,2b_1,2c_1] \]
and
\[ K^{c_i-} =  C[2b_g,2c_g,\ldots,2b_{i+1}, 2c_{i+1}, 2b_i, 2(c_i-1), \ldots,2b_1,2c_1] \]
We show that, once the main obstruction \eqref{eqn:main-obstruction} is satisfied for $K$, then $K^{b_i+}$ and $K^{c_i-}$ also satisfy the main obstruction \eqref{eqn:main-obstruction} in most cases. 

\subsection{Induction argument condition}
In this section we give a sufficient condition for the induction argument to work.

\begin{proposition}
\label{prop:induction-criteria}
Let $K$ be a positive $2$-bridge knot of genus $g$. Assume that $K' = K^{b_i+}$ or $K^{c_i-}$. If $K$ satisfies \eqref{eqn:main-obstruction} and
\begin{equation} 
\label{eqn:condition}
\frac{\det(K')-\det(K)}{a_2(K') - a_2(K)} \geq 
\begin{cases} 
\frac{176}{10g} & (g \geq 6) \\
\frac{16}{g} & (g \leq 5) \\
\end{cases} 
\end{equation}
then $K'$ also satisfies \eqref{eqn:main-obstruction}.
\end{proposition}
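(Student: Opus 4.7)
The proposition is essentially a one-line algebraic manipulation, so the plan is mainly to set up the correct bookkeeping. First I would note that both operations $K \mapsto K^{b_i+}$ and $K \mapsto K^{c_i-}$ preserve the genus: they only alter an exponent inside an existing twist region of the Conway form $C[2b_g,2c_g,\ldots,2b_1,2c_1]$, so $K'$ is again a positive 2-bridge knot represented by a Conway form with $g$ pairs of twist regions and the plumbed Seifert surface still realizes genus $g$. Consequently the piecewise constant on the right-hand side of \eqref{eqn:main-obstruction}, call it $C_g$, is the same for $K$ and $K'$.

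Next I would use Proposition \ref{prop:a2-formula} to check that $a_2(K') - a_2(K) > 0$, which is needed so that \eqref{eqn:condition} makes sense and can be cleared of denominators. Direct substitution gives
\[ a_2(K^{b_i+}) - a_2(K) = -\sum_{\ell=1}^{i} c_\ell, \qquad a_2(K^{c_i-}) - a_2(K) = \sum_{k=i}^{g} b_k, \]
both strictly positive since $b_k>0$ and $c_\ell<0$ throughout. Hence \eqref{eqn:condition} is equivalent to the clean inequality
\[ \det(K') - \det(K) \;\geq\; C_g\bigl(a_2(K') - a_2(K)\bigr). \]

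Finally I would add this to the standing hypothesis $\det(K) + 6g - 5 > C_g\,a_2(K)$ to obtain
\[ \det(K') + 6g - 5 \;>\; C_g\,a_2(K) + C_g\bigl(a_2(K') - a_2(K)\bigr) \;=\; C_g\,a_2(K'), \]
which is exactly \eqref{eqn:main-obstruction} for $K'$ (using the genus invariance from the first step). There is no genuine obstacle in the argument itself; its entire purpose is to package the inductive step so that the real work can be isolated into verifying \eqref{eqn:condition} in the sections that follow, where one must combine Proposition \ref{prop:a2-formula} with the recurrence \eqref{eqn:recurrence} (and presumably Lemma \ref{lemma:1st-estimate}) to bound the ratio $\tfrac{\det(K')-\det(K)}{a_2(K')-a_2(K)}$ from below uniformly.
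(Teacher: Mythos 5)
Your proposal is correct and is essentially the paper's own argument: the paper likewise just writes $\det(K')+6g-5=\det(K)+6g-5+\alpha\,(a_2(K')-a_2(K))$ with $\alpha$ the ratio in \eqref{eqn:condition} and adds the standing hypothesis, the only cosmetic difference being that you explicitly verify $a_2(K')-a_2(K)>0$ and the genus invariance, which the paper leaves implicit (and records later in the proof of Lemma \ref{lemma:diff-estimate}).
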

\begin{proof}
Let $\alpha =\frac{\det(K')-\det(K)}{a_2(K') - a_2(K)}$. Since $K$ satisfies  \eqref{eqn:main-obstruction},
\begin{align*}
\det(K')+6g-5 &= \det(K) + 6g-5 + \alpha(a_2(K') - a_2(K)) \\
& \geq \begin{cases}
\frac{176a_2(K')}{10g} + (a_2(K') - a_2(K)) \left(\frac{-176}{10g} + \alpha \right) & (g\geq 6) \\
\frac{16a_2(K')}{g} +(a_2(K') - a_2(K)) \left(\frac{-16}{g} + \alpha \right) & (g \leq 5)
\end{cases}
\end{align*}
Thus if $\alpha$ satisfies \eqref{eqn:condition}, $K'$ also satisfies \eqref{eqn:main-obstruction}.
\end{proof}

The next lemma evaluates $\frac{\det(K')-\det(K)}{a_2(K') - a_2(K)}$.

\begin{lemma}
\label{lemma:diff-estimate}
Let $K=C[2b_g,2c_g,\ldots,2b_1,2c_1]$ be a positive $2$-bridge knot of genus $g$.
\begin{itemize}
\item[(i)] $\det(K^{b_i+}) - \det(K) \geq (8(g-i)+4)(a_2(K^{b_i+})-a_2(K))$.
\item[(ii)] $\det(K^{c_i-}) - \det(K) \geq (8i-4)(a_2(K^{c_i-})-a_2(K))$.
\end{itemize}
\end{lemma}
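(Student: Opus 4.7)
The plan is to handle the two parts in tandem, reducing (ii) to (i) via the symmetry \eqref{eqn:symmetry} and proving (i) by a direct matrix computation with the recurrence \eqref{eqn:recurrence}. First I would compute both $a_2$ differences from Proposition \ref{prop:a2-formula}: in $a_2(K) = \sum_{1 \leq j \leq k \leq g}(-b_kc_j)$, replacing $b_i$ by $b_i+1$ affects only the terms with $k=i$, giving $a_2(K^{b_i+}) - a_2(K) = \sum_{j=1}^i(-c_j)$; analogously $a_2(K^{c_i-}) - a_2(K) = \sum_{k=i}^g b_k$. By \eqref{eqn:symmetry}, $K$ admits an alternative positive form $C[2b_g', 2c_g', \ldots, 2b_1', 2c_1']$ with $b_j' = -c_{g-j+1}$ and $c_j' = -b_{g-j+1}$, under which $K^{c_i-}$ becomes $(K')^{b_{g-i+1}'+}$. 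Hence (ii) follows from (i) applied at the index $i' = g-i+1$, producing the factor $8(g-i')+4 = 8i-4$ and the matching $a_2$-difference $\sum_{j=1}^{i'}(-c_j') = \sum_{k=i}^g b_k$. So it suffices to prove (i).

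For (i), I would write \eqref{eqn:recurrence} in matrix form $(d_j, p_j)^T = M_j(d_{j-1}, p_{j-1})^T$ with $M_j = \begin{pmatrix} -4b_jc_j-1 & -2b_j \\ -2c_j & -1 \end{pmatrix} \in SL_2(\Z)$. Incrementing $b_i$ changes $d_i$ by $-4c_id_{i-1} - 2p_{i-1}$, which simplifies to $2p_i$, while leaving $p_i$ unchanged. Thus the differences $(\Delta_j, E_j) := (d_j(K^{b_i+}) - d_j(K),\, p_j(K^{b_i+}) - p_j(K))$ start as $(2p_i, 0)$ at level $i$ and satisfy the same linear recurrence for $j>i$. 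By linearity,
\[
(\Delta_g, E_g)^T = 2p_i \cdot M_g M_{g-1} \cdots M_{i+1}(1,0)^T = 2p_i \cdot (A_i, C_i)^T.
\]
The crucial observation is that $(A_i, C_i) = (d_{g-i}, p_{g-i})$ for the $(g-i)$-genus positive 2-bridge knot $\bar K_i := C[2b_g, 2c_g, \ldots, 2b_{i+1}, 2c_{i+1}]$, since both pairs arise from the initial vector $(1,0)$ by the same sequence of matrices $M_{i+1}, \ldots, M_g$. In particular $A_i = \det(\bar K_i)$, yielding the closed form $\det(K^{b_i+}) - \det(K) = 2p_i \det(\bar K_i)$.

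Finally, I would lower-bound the two factors independently. Lemma \ref{lemma:1st-estimate} applied to $\bar K_i$ gives $\det(\bar K_i) \geq \sum_{j=i+1}^g(-2b_jc_j) + 1 \geq 2(g-i)+1$. For the other factor I would show $p_i \geq 2\sum_{j=1}^i(-c_j)$ by induction on $i$: combining $p_i = -2c_id_{i-1} - p_{i-1}$ with the easy inequality $d_{i-1} \geq p_{i-1} + 1$ (obtained by observing that $d_j - p_j$ is non-decreasing under \eqref{eqn:recurrence}, starting from $d_0 - p_0 = 1$), one finds $p_i - p_{i-1} \geq 2(-c_i)(p_{i-1}+1) - 2p_{i-1} \geq 2(-c_i)$ since $-c_i \geq 1$, and then sums. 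Multiplying the two bounds yields
\[
\det(K^{b_i+}) - \det(K) \geq 4\sum_{j=1}^i(-c_j) \cdot (2(g-i)+1) = (8(g-i)+4)\bigl(a_2(K^{b_i+}) - a_2(K)\bigr),
\]
as required. The main obstacle is the matrix identification in the middle paragraph, namely recognizing that the tail product $M_g\cdots M_{i+1}$ applied to $(1,0)^T$ literally computes the $(d,p)$-pair of the truncated 2-bridge knot $\bar K_i$; once this is granted, the rest is linear algebra together with Lemma \ref{lemma:1st-estimate}.
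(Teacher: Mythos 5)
Your proposal is correct, but it proves the lemma by a different mechanism than the paper. The paper works with the difference sequences $x_j=d_j(K^{b_i+})-d_j(K)$, $y_j=p_j(K^{b_i+})-p_j(K)$ and runs a two-part induction on $j$, propagating the pair of inequalities $x_j-y_j\geq 4\sum_{j\leq i}(-c_j)$ and $x_j\geq (8(j-i)+4)\sum_{j\leq i}(-c_j)$ from the base level $j=i$ (where Lemma \ref{lemma:1st-estimate} enters), and then repeats an analogous induction separately for part (ii). You instead exploit the transfer-matrix form of \eqref{eqn:recurrence} to get an exact factorization: the level-$i$ perturbation is precisely $(2p_i,0)$ (the paper's $x_i=-4c_id_{i-1}-2p_{i-1}$ is the same quantity, but it never records that this equals $2p_i$), and pushing it through $M_g\cdots M_{i+1}$ identifies the tail product with the $(d,p)$-data of the truncated knot $C[2b_g,\ldots,2c_{i+1}]$, giving $\det(K^{b_i+})-\det(K)=2p_i\det(C[2b_g,\ldots,2c_{i+1}])$; the two factors are then bounded separately by a short induction ($p_i\geq 2\sum_{j\leq i}(-c_j)$, using $d_{j}\geq p_{j}+1$) and by Lemma \ref{lemma:1st-estimate}, and part (ii) comes for free from the symmetry \eqref{eqn:symmetry}, which sends $K^{c_i-}$ to a $b$-increment at index $g-i+1$ with the matching constant $8i-4$ and $a_2$-difference $\sum_{k\geq i}b_k$. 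Your route buys an exact identity (tight exactly when the tail is a $(2,2(g-i)+1)$-torus knot and the head has all $b_j=1$), explains the shape of the constant $8(g-i)+4=4\cdot(2(g-i)+1)$, and halves the case analysis via symmetry; the paper's inequality-propagating induction avoids the need to recognize the tail product as a truncated knot and handles (ii) without invoking the diagram symmetry. One small point to make explicit in a final write-up: when $i=g$ the tail product is empty, so one should either treat this degenerate case separately (there $\det(K^{b_g+})-\det(K)=2p_g\geq 4\sum_{j\leq g}(-c_j)$ suffices) or adopt the convention that the empty Conway form has $d=1$, since Lemma \ref{lemma:1st-estimate} as stated concerns genuine positive $2$-bridge knots.
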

\begin{proof}
(i) By Proposition \ref{prop:a2-formula},
\[ (a_2(K^{b_i+})-a_2(K))= -c_1-c_2-\cdots-c_{i}. \]
For $j=1,\ldots, m$, we put $x_j=d(K^{b_i+}_j)- d(K_j)$ and $y_j =p(K^{b_i+}_j)- p(K_j)$. Here $K^{b_i+}_j$ and $K_j$ are the $j$-th truncate of $K^{b_i+}$ and $K$, defined in \eqref{eqn:truncate}.

It is clear that $x_1=\cdots = x_{i-1}=0$ and $y_1 = \cdots =y_{i-1}= 0$.
We prove the following claim that implies the deisred assertion.
\begin{claim}
For $j\geq i$,
\begin{equation*}
 x_j-y_j \geq  4(-c_1-c_2-\cdots-c_{i})
\end{equation*}
and
\begin{equation*}
x_j \geq (8(j-i)+4)(-c_1-c_2-\cdots-c_{i})
\end{equation*}
hold. 
\end{claim}
\begin{proof}[Proof of Claim]
We prove the claim by induction on $j$. For $j=i$, $y_i=0$. By the recurrence relation \eqref{eqn:recurrence} of $d_i$ and $p_i$,
\begin{align*}
x_{i} 
&= -4c_{i}d_{i-1}-2p_{i-1}\\
&= (-4c_{i}-2)d_{i-1} + 2(d_{i-1}-p_{i-1}) \\
&\geq ( -4c_i-2)(-2c_1-2c_2-\cdots -2c_{i-1} +1) + 2 \\ 
& \geq 4(-c_1 - c_2 - \cdots - c_{i})
\end{align*}
Here at the third inequality we used Lemma \ref{lemma:1st-estimate} that says that 
\[ d_{i-1} =\det(K_{i-1}) \geq \left( \sum_{k=1}^{i-1}-2b_{k}c_{k}\right) + 1 \geq  (-2c_1-2c_2-\cdots -2c_{i-1} +1) \]
and the fact that $d_{i-1}-p_{i-1} \geq 1$.

For $j>i$, by the linear recurrence relations \eqref{eqn:recurrence}, it follows that $x_i$ and $y_i$ also satisfy the same recurrence relation
\begin{equation}
\label{eqn:rec-x-y}
\begin{cases}
x_{j} = (-4b_jc_j-1)x_{j-1} -2b_j y_{j-1}\\
y_{j}=-2c_jx_{j-1} -  y_{j-1}
\end{cases}
\end{equation}
Thus
\begin{align*}
x_j-y_j
&= (-4b_jc_j+2c_j-1) x_{j-1} - (2b_j-1) y_{j-1} \\
&= (-4b_jc_j-2b_j+2c_j)x_{j-1} + (2b_j-1)(x_{j-1}- y_{j-1}) \\
& \geq (2b_{j}-1)(x_{j-1}-y_{j-1}) \\
& \geq 4(-c_1-c_2-\cdots-c_{i})
\end{align*}
and
\begin{align*}
x_j
&= (-4b_jc_j-1)x_{j-1} -2b_j y_{j-1} \\
&= (-4b_jc_j-2b_j -1) x_{j-1} + 2b_{j}(x_{j-1}-y_{j-1}) \\
&\geq x_{j-1} +8(-c_1 - c_2 - \cdots - c_{i})\\
&\geq \left( (8(j-1-i)+4) + 8 \right) (-c_1 - c_2 - \cdots - c_{i})\\
& = \left( 8(j-i)+4 \right) (-c_1 - c_2 - \cdots - c_{i})
\end{align*}
\end{proof}

(ii)
By Proposition \ref{prop:a2-formula}
\[ (a_2(K^{c_i-})-a_2(K))= b_i+b_{i+1}+\cdots + b_{g} \]

For $j=1,\ldots, m$, we put $x_j=d(K^{c_i-}_j)- d(K_j)$ and $y_j =p(K^{c_i-}_j)- p(K_j)$.  It is clear that $x_1=\cdots = x_{i-1}=0$ and $y_1 = \cdots =y_{i-1}= 0$.
As in the proof of (i), we prove the following claim that implies the desired assertion.

\begin{claim}
For $j\geq i$
\begin{equation*}
\label{eqn:c_x-y} 
x_j-y_j \geq 4i-2
\end{equation*}
and
\begin{equation*}
\label{eqn:c_x} x_j \geq (8i-4)(b_i+b_{i+1}+\cdots +b_{j})
\end{equation*}
holds.
\end{claim}
\begin{proof}[Proof of Claim]
We prove the claim by induction on $j$.

For $j=i$, $x_{i} = 4b_{i}d_{i-1}, \quad y_i=2p_{i-1}$.
By Lemma \ref{lemma:1st-estimate} $d_{i-1} \geq 2i-1$ hence
\[x_i \geq 4b_i(2i-1) \geq (8i-4)b_i \]
and
\[ x_{i}-y_{i} = (4b_{i}-2)d_{i-1} + 2(d_{i-1}-p_{i-1}) \geq 2b_{i}d_{i-1} + 2 > 4i-2 \]
holds.

For $j>i$, since $x_j$ and $y_j$ satisfies the same recurrence relations \eqref{eqn:rec-x-y} so by the same argument as (i), we get
\[ (x_j-y_j) \geq (2b_j-1)(x_{j-1}-y_{j-1}) \geq 4i-2.\]
Finally, 
\begin{align*}
x_j
&= (-4b_jc_j-1)x_{j-1} -2b_j y_{j-1} \\
&= (-4b_jc_j-2b_j -1) x_{j-1} + 2b_{j}(x_{j-1}-y_{j-1}) \\
& \geq x_{j-1} + 2b_j(4i-2) \\
& \geq (8i-4)(b_{i}+\cdots + b_{j-1} + b_{j})
\end{align*}
\end{proof}
\end{proof}

By Proposition \ref{prop:induction-criteria} and Lemma \ref{lemma:diff-estimate}, we have the following conditions for induction argument to work.
\begin{corollary}
\label{cor:induction}
Let $K$ be a positive 2-bridge knot of genus $g$.
\begin{itemize}
\item[(i)] If $g \geq 4$ and $K$ satisfies \eqref{eqn:main-obstruction}, then $K^{b_i+}$ and $K^{c_i-}$ also satisfy \eqref{eqn:main-obstruction}.
\item[(ii)] If $2 \leq g \leq 3$ and $K$ satisfies \eqref{eqn:main-obstruction}, then then $K^{b_i+}$ and $K^{c_i-}$ also satisfy \eqref{eqn:main-obstruction}, except $K^{b_g+}$ and $K^{c_1-}$.
\end{itemize}
\end{corollary}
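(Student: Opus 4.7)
The plan is to combine the two preceding results essentially mechanically. Proposition \ref{prop:induction-criteria} tells us that in order to propagate the main obstruction \eqref{eqn:main-obstruction} from $K$ to either $K^{b_i+}$ or $K^{c_i-}$, it suffices to lower-bound the ratio $\alpha := (\det(K') - \det(K))/(a_2(K') - a_2(K))$ by $\frac{176}{10g}$ when $g \geq 6$ and by $\frac{16}{g}$ when $g \leq 5$. Lemma \ref{lemma:diff-estimate} supplies exactly such lower bounds, namely $\alpha \geq 8(g-i)+4$ when $K' = K^{b_i+}$ and $\alpha \geq 8i-4$ when $K' = K^{c_i-}$. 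So the entire corollary reduces to comparing these explicit linear expressions in $i$ against the thresholds of \eqref{eqn:condition}.

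The verification is an elementary numerical case analysis. The minima of the two bounds are attained at $i = g$ and $i = 1$ respectively, in both cases giving $\alpha \geq 4$. First I would treat $g \geq 6$: here $\frac{176}{10g} \leq \frac{176}{60} < 3 < 4$, so the uniform bound $\alpha \geq 4$ already dominates the threshold for every index $i$. Next, for $g \in \{4, 5\}$ one has $\frac{16}{g} \leq 4$, and once again the bound $\alpha \geq 4$ suffices uniformly in $i$; this establishes part (i).

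For part (ii), with $g \in \{2, 3\}$, the threshold $\frac{16}{g}$ equals $8$ or $\frac{16}{3}$, both strictly larger than $4$. Consequently, the bound $\alpha \geq 4$ fails precisely in the extremal situations $i = g$ for $K^{b_i+}$ and $i = 1$ for $K^{c_i-}$. In every other case one gets $\alpha \geq 12$, which comfortably exceeds $\frac{16}{g}$, and the propagation still succeeds. This singles out exactly the exceptional moves $K^{b_g+}$ and $K^{c_1-}$ listed in the statement.

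I do not anticipate any conceptual obstacle in this step: once Proposition \ref{prop:induction-criteria} and Lemma \ref{lemma:diff-estimate} are available, the corollary is a short arithmetic comparison. The substantive work of the section lies in Lemma \ref{lemma:diff-estimate} itself, where the recurrence \eqref{eqn:recurrence} has to be followed simultaneously for the pairs $(d_j, p_j)$ of $K$ and $K'$; the role of Corollary \ref{cor:induction} is just to harvest the cleanest possible induction step from those estimates.
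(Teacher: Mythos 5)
Your proposal is correct and is exactly the argument the paper intends: the corollary is stated as an immediate consequence of Proposition \ref{prop:induction-criteria} and Lemma \ref{lemma:diff-estimate}, and your arithmetic comparison of the bounds $8(g-i)+4$ and $8i-4$ (minimum $4$, otherwise $\geq 12$) against the thresholds $\tfrac{176}{10g}$ and $\tfrac{16}{g}$ correctly identifies $K^{b_g+}$ and $K^{c_1-}$ as the only exceptions when $g\in\{2,3\}$.
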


In the remaining sections, we prove Theorem \ref{theorem:main}, the non-existence of chirally cosmetic surgeries. 

\subsection{Genus $\geq 4$ cases}

For $g \geq 4$ case, by Corollary \ref{cor:induction} (i), to check \eqref{eqn:main-obstruction} it is sufficient to treat positive two bridge knots with small $b_i$ and $-c_i$.

Let $\delta(K) = \left(\sum_{i=g}b_i + \sum_{i=1}^{g}(-c_i)\right) -2g$. 
$\delta(K)=0$ is and only if $K$ is the $(2,2g+1)$-torus knot. It is easy to see that $(2,2g+1)$ torus knot does not satisfy the main obstruction \eqref{eqn:main-obstruction}. However for $\delta(K)=1$ case we have the following.

\begin{lemma}
If $g\geq 4$ and $\delta(K)=1$ then $K$ satisfies \eqref{eqn:main-obstruction} unless $b_g=2$ or $c_1=-2$.
\end{lemma}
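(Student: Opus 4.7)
The plan is to reduce to a single one-parameter family via the symmetry \eqref{eqn:symmetry}, compute $a_2(K)$ and $\det(K)$ in closed form, and then check the main obstruction by elementary analysis.

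The assumption $\delta(K)=1$ forces exactly one of $b_1,\ldots,b_g,-c_1,\ldots,-c_g$ to equal $2$, with all others equal to $1$. Under \eqref{eqn:symmetry}, the knot having $-c_{j_0}=2$ (and all other entries equal to $1$) is the same knot as the one with $b_{g-j_0+1}=2$ (and all other entries equal to $1$); in particular the two excluded cases $b_g=2$ and $c_1=-2$ represent a single knot. Hence it is enough to treat $K=K(g,j)$ defined by $b_j=2$, all other $b_i=1$, and all $c_i=-1$, for $j\in\{1,\ldots,g-1\}$. In this family, Proposition \ref{prop:a2-formula} immediately gives $a_2(K)=\tfrac{g(g+1)}{2}+j$. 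For $\det(K)$, the recurrence \eqref{eqn:recurrence} produces $(d_i,p_i)=(2i+1,2i)$ for $0\le i\le j-1$, and then the $b_j=2$ step gives $(d_j,p_j)=(6j+1,2j)$. For $i\ge j$ every remaining step uses $b=1$, $c=-1$, and its $2\times 2$ transfer matrix $A$ satisfies $(A-I)^2=0$, so $A^n=I+n(A-I)$. Unwinding this yields the closed form
\[ \det(K) \;=\; d_j+2(g-j)(d_j-p_j) \;=\; 1+2g+4j(2g-2j+1). \]

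Substituting these formulas into \eqref{eqn:main-obstruction} reduces the lemma to
\[ 1+2g+4j(2g-2j+1)+6g-5 \;>\; \frac{\kappa_g}{g}\!\left(\frac{g(g+1)}{2}+j\right), \qquad 1\le j\le g-1, \]
where $\kappa_g=16$ for $g\in\{4,5\}$ and $\kappa_g=\tfrac{88}{5}$ for $g\ge 6$. For $g\in\{4,5\}$, this is verified by inspection on the finitely many values of $j$. For $g\ge 6$, clearing denominators rewrites it equivalently as
\[ j\bigl(5g(2g-2j+1)-22\bigr) \;>\; g(g+16). \]
The left-hand side is a downward-opening quadratic in $j$ with leading coefficient $-10g$, hence concave on $[1,g-1]$; its minimum on that interval is attained at an endpoint, so I only need to check $j=1$ (where the inequality becomes $9g^2-21g-22>0$) and $j=g-1$ (where it becomes $14g^2-53g+22>0$), both of which are immediate for $g\ge 6$.

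The only mildly delicate step is the closed form for $d_g$: once the nilpotency $(A-I)^2=0$ is used to handle the trailing $b=1,c=-1$ block uniformly in $g-j$, the remainder of the argument is routine polynomial manipulation, and the split $g\in\{4,5\}$ versus $g\ge 6$ merely mirrors the two regimes of the right-hand side of \eqref{eqn:main-obstruction}.
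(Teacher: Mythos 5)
Your proof is correct. The structure mirrors the paper's: both reduce via the symmetry \eqref{eqn:symmetry} to the case $b_j=2$ with $j\in\{1,\dots,g-1\}$ and then pit the growth of $\det(K)$ against that of $a_2(K)$ in \eqref{eqn:main-obstruction}. Where you differ is in how the determinant is controlled: the paper invokes its Lemma \ref{lemma:diff-estimate} to get $\det(K)\ge \det(T_{2,2g+1})+(8(g-j)+4)j$, then crudely bounds $(8(g-j)+4)j\ge 7g$ and $a_2(K)<\tfrac{g(g+1)}{2}+g$ to verify a single uniform inequality $15g-4$ versus $\kappa_g a_2/g$; you instead compute $\det(K)$ exactly with the transfer matrix and the nilpotency $(A-I)^2=0$, obtaining $\det(K)=1+2g+4j(2g-2j+1)$ (which is precisely the equality case of the paper's Lemma \ref{lemma:diff-estimate} bound, so the two proofs are numerically consistent), and then verify the resulting polynomial inequality by concavity in $j$ plus the endpoint checks $9g^2-21g-22>0$ and $14g^2-53g+22>0$, with a finite inspection for $g=4,5$ (which indeed succeeds: for $g=4$ the condition reduces to $j(8-2j)>3$, and for $g=5$ to $4j(11-2j)>12+\tfrac{16j}{5}$). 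Your version is self-contained and sharper but repeats work the paper has already packaged into Lemma \ref{lemma:diff-estimate}; the paper's version is shorter given that lemma, at the cost of coarser estimates. Both are valid; the only cosmetic gap is that you assert the $g\in\{4,5\}$ inspection without displaying it, but it is a seven-case check that holds.
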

\begin{proof}
By the symmetry \eqref{eqn:symmetry}, it is sufficient to show the case $b_i=2$ for some $i=1,\ldots,g-1$. 
In this case,  $a_2(K) - a_2(T_{2,2g+1}) = i \leq g$. 
By Lemma \ref{lemma:diff-estimate}
\[ \det(K)-\det(T_{2,2g+1}) \geq (8(g-i)+4)i \geq 7g \]
Therefore
\begin{align*}
\det(K)+6g-5 & \geq \det(T_{2,2g+1}) + 7g + 6g-5 =  15g-4
\end{align*}
On the other hand, since $a_2(K) < a_2(T_{2,2g+1})+g = \frac{g(g+1)}{2}$ 
\[ \frac{176a_2(K)}{10g} < \frac{176a_2(T_{2,2g+1})}{10g} + \frac{176}{10} = \frac{176}{20}(g+1) + \frac{176}{10} \leq 15g-4 \]
if $g\geq 6$ and
\[ \frac{16a_2(K)}{g} < \frac{16a_2(T_{2,2g+1})}{g} + 16 = 8(g+1) + 16 \leq 15g-4\]
if $g=4,5$.
Hence $K$ satisfies \eqref{eqn:main-obstruction}.
\end{proof}

Thus if a positive $2$-bridge knot $K$ has chirally cosmetic surgeries, then $K$ is  of the form
\[ K=[2b_g,-2,2,-2,\ldots,2,-2c_1] \]
For such a knot $K$,
\begin{align*}
\det(K)&= -8b_gc_1 g + 4b_1c_1-4b_gg + 4c_1g + 4b_g -4c_g +2g-3\\
a_2(K) &= -b_gc_1 +b_gg - c_1g -b_g + c_1 + \frac{g^{2}-3g+2}{2} 
\end{align*}
holds. By direct computation, one can check that \eqref{eqn:main-obstruction} holds if 
\begin{itemize}
\item $(b_g,c_1)=(2,-2)$ and $g\geq 4$
\item $(b_g,c_1)=(1,-4)$ and $g\geq 5$
\item $(b_g,c_1)=(4,-1)$ and $g\geq 5$
\item $(b_g,c_1)=(1,-3)$ and $g\geq 8$
\item $(b_g,c_1)=(3,-1)$ and $g\geq 8$
\end{itemize}
By Corollary \ref{cor:induction} (i) this implies that $K$ satisfies the main obstruction \eqref{eqn:main-obstruction} unless

\begin{itemize}
\item[(a)] $g=4$ and $K=[2b_g,-2,2,-2,\ldots,2,-2]$ $(b_g \geq 1)$ or $K=[2,-2,\ldots,2c_1]$ ($c_1 \leq -1$)
\item[(b)] $g=5,6,7$ and $K=[2b_g,-2,2,-2,\ldots,2,-2]$ $(b_g=1,2,3)$ or $K=[2,-2,\ldots,2c_1]$ ($c_1=-1,-2,-3$)
\item[(c)] $g\geq 8$ and $K=[2b_g,-2,2,-2,\ldots,2,-2]$ $(b_1=1,2)$ or $K=[2,-2,\ldots,2c_1]$  ($c_1=-1,-2$).
\end{itemize}

Now we move to step B. We show that the knots in (a), (b), (c) do not satisfy \eqref{eqn:sub-obst}.

By the symmetry \eqref{eqn:symmetry} $K=[2b_g,-2,2,-2,\ldots,2,-2] = [2,-2,2,-2,\ldots,2,-2b_g]$ so it sufficient to treat the case $K=[2b_g,-2,2,-2,\ldots,2,-2]$ and $b_g>1$.
For such a knot $K$, by direct computations
\begin{align*} a_2(K)&= \frac{(2b_g+g-1)g}{2} \\
a_4(K)&=\frac{(4b_g+g-2)g(g^{2}-1)}{24} \\
4v_3(K)&= \frac{g(b_g^2+b_g+g-1)}{2}\\
\det(K)&= 4b_gg-2g+1
\end{align*}
From these computations, one can directly check that $K$ does not satisfy \eqref{eqn:sub-obst} for the cases (a), (b), (c).
%

This completes the proof of non-existence of chirally cosmetic surgeries for positive 2-bridge knots other than the $(2,2g+1)$-torus knots for $g\geq 4$ case.

\subsection{Genus $\leq 3$ cases}

For $g \leq 3$ case, by Corollary \ref{cor:induction} (ii) we need some additional care for the treatment of the parameters $b_1$ and $c_g$. We treat the cases $g=1,2,3$ separately.

\subsubsection{Genus one case}
It has already shown that chirally comsetic surgeries of genus one alternating knot (that of course includes the genus one positive 2-bridge knot) is either (A) or (B) in introduction \cite[Theorem 6.4]{iis1}.
Indeed, one can directly show that genus one positive two-bridge knot $K$ does not satisfy \eqref{eqn:sub-obst}.

\subsubsection{Genus two case}

To avoid subscript and minus signs, we put 
\[ K=C[2x,-2y,2z,-2w] \qquad (x,y,z,w>0) \]
Since
\begin{align*}
\det(K) &=16xyzw-4yx-4zw-4xw+1\\
a_2(K) &= zw+xw+xy, 
\end{align*}
The main obstruction \eqref{eqn:main-obstruction} is satisfied if and only if
\begin{equation}
\label{eqn:g2-obst}
4xyzw-3yx-3zw-3xw +2 \geq 0
\end{equation}
By a routine calculation, \eqref{eqn:g2-obst} is satisfied if 
\begin{itemize}
\item[(a)] $y,z\geq 2$. 
\item[(b)] $y=2$, $z=1$, and $w \geq 2$.	
\item[(c)] $y=1$, $z=2$, and $x \geq 2$.
\end{itemize}
Thus thanks to Corollary \ref{cor:induction} (ii), positive 2-bridge knots of genus two satisfies the main obstruction \eqref{eqn:main-obstruction} possibly except
\begin{itemize}
\item[(a)] $C[2x,-2,2,-2w]$,
\item[(b)] $C[2x,-4,2,-2]$.
\item[(c)] $C[2,-2,4,-2w]$.
\end{itemize}

Now we move to Step B. We show that the knots in (a), (b), (c) do not satisfy \eqref{eqn:sub-obst}.

For the case (a), by the symmetry \eqref{eqn:symmetry}, we may assume that $x\geq w$. The invariants appeared in our constraints is calculated as follows.
\begin{align*}
a_2(K) & =xw+x+w\\
a_4(K) & =xw\\
\det(K) &=12xw-4x-4w+1\\
4v_3(K) &=\frac{1}{2}(x^2 w + w^2 x + x^2 +w^2 +x+w)
\end{align*}

From the formula, it follows that if $w=1$ then $K$ does not satisfy \eqref{eqn:sub-obst}. 
If $w \geq 2$ then $x\geq 2$ and
\[ \det(K)+6g-5=12xw-4x-4w+8 \geq 6xw+6x+6w = 6a_2(K)\]
Furthermore, in this case
\[ \frac{x+w}{2}a_2(K) \leq 4v_3(K)\]
hence
\[ \frac{2(7a_2(K)^2-a_2(K)-10a_4(K))}{4v_3(K)} \leq \frac{28}{x+w}a_2(K) \]
Thus if $K$ satisfies \eqref{eqn:sub-obst}
\[ 6a_2(K) \leq \det(K)+6g-5 = \frac{2(7a_2(K)^2-a_2(K)-10a_4(K))}{4v_3(K)} \leq \frac{28}{x+w}a_2(K) \]
so $x+w\leq 4$. This shows that $(x,w)=(2,2)$. However, $C[4,-2,2,-4]$ does not satisfy \eqref{eqn:sub-obst}.

To see case (b) and (c), by the symmetry \eqref{eqn:symmetry} $C[2x,-4,2,-2] = C[2,-2,4,-2x]$ so it is sufficient to treat the case (b).
By direct computation one can check that $C[2x,-4,2,-2]$ does not satisfy \eqref{eqn:sub-obst}.

\subsubsection{Genus three case}

As in the genus two case to avoid subscripts and signs, we put 
\[ K=C[2x,-2y,2z,-2w,2u,-2v] \qquad (x,y,z,w,u,v>0). \]

\begin{lemma}
\label{lemma:genus-3}
$K$ has no chirally cosmetic surgeries unless $y=z=w=u=1$.
\end{lemma}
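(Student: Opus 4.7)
The plan is to combine the main chirally cosmetic surgery obstruction from Corollary \ref{cor:main-obstruction} with the induction from Corollary \ref{cor:induction}(ii). Since $g=3\le 5$, the inequality \eqref{eqn:main-obstruction} takes the explicit form
\[ 3\det(K) + 39 > 16\, a_2(K), \]
and it suffices to show that this holds whenever $(y,z,w,u) \neq (1,1,1,1)$. The exceptional family $y=z=w=u=1$, i.e.\ the knots $C[2x,-2,2,-2,2,-2v]$, will have to be handled separately in a later step via the equality \eqref{eqn:sub-obst}; this lemma only reduces the problem to that family.

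By Corollary \ref{cor:induction}(ii), if a genus-$3$ positive 2-bridge knot satisfies \eqref{eqn:main-obstruction}, then so does every knot obtained by increasing one of the \emph{interior} parameters $b_1 = u$, $b_2 = z$, $-c_2 = w$, or $-c_3 = y$. Consequently it is enough to verify \eqref{eqn:main-obstruction} in the four ``minimal'' configurations in which exactly one of $y,z,w,u$ equals $2$ and the other three equal $1$, while $x,v \geq 1$ are arbitrary. The symmetry \eqref{eqn:symmetry} acts on the six parameters by $(x,y,z,w,u,v) \mapsto (v,u,w,z,y,x)$, so it pairs $(y,z,w,u)=(2,1,1,1)$ with $(1,1,1,2)$ and $(1,2,1,1)$ with $(1,1,2,1)$. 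This reduces the verification to the two families
\[ K_a = C[2x,-4,2,-2,2,-2v], \qquad K_b = C[2x,-2,4,-2,2,-2v]. \]

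For each base case I will compute $a_2(K)$ from Proposition \ref{prop:a2-formula} and $\det(K) = d_3$ from the recurrence \eqref{eqn:recurrence}, yielding polynomials in $x$ and $v$ in which the quantity $3\det(K) + 39 - 16\,a_2(K)$ has a strictly positive $xv$-coefficient that dominates the lower-order terms. I then argue that for any fixed $v \ge 1$ this expression is affine in $x$ with positive slope, and that its value at $x=1$ is already strictly positive, so \eqref{eqn:main-obstruction} holds for all $x,v\ge 1$ in both families.

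The computations involved are short but a bit delicate because the three-step unrolling of the recurrence \eqref{eqn:recurrence} must be carried out carefully for each of the two base configurations; this bookkeeping is the only real technical point. Once the two explicit polynomial inequalities are established, the conclusion of the lemma follows directly from Corollary \ref{cor:main-obstruction}, since any $(y,z,w,u)\neq(1,1,1,1)$ dominates (in each coordinate) one of the four minimal configurations and hence inherits the main obstruction by Corollary \ref{cor:induction}(ii).
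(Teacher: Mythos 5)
Your proposal is correct and follows essentially the same route as the paper: reduce via Corollary \ref{cor:induction}(ii) and the symmetry \eqref{eqn:symmetry} to the two base families $C[2x,-4,2,-2,2,-2v]$ and $C[2x,-2,4,-2,2,-2v]$, and verify the main obstruction \eqref{eqn:main-obstruction} there by explicit computation of $\det$ and $a_2$ (the resulting quantities $140xv-108x-56v+32$ and $188xv-104x-108v+28$ are indeed positive for all $x,v\ge 1$).
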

\begin{proof}
Let $K=C[2x,-2,4,-2,2,-2v]$. For this knot
\begin{align*}
\det(K) &= 68xv-24x-20v+7\\
a_2(K) &= xv+2x+3v+2 
\end{align*}
hence by direct computation, $K$ satisfies the main obstruction \eqref{eqn:main-obstruction} for all $x,v$. Hence by Corollary \ref{cor:induction} (ii), a positive two-bridge knot $C[2x,-2y,2z,-2w,2u,-2v]$ has no chirally cosmetic surgeries if $z \neq 1$. By the symmetry \eqref{eqn:symmetry}, it follows that  $C[2x,-2y,2z,-2w,2u,-2v]$ has no chirally cosmetic surgeries if $w \neq 1$.

Similarly, let $K=C[2x,-4,2,-2,2,-2v]$. For this knot
\begin{align*}
\det(K) &= 52xv-20x-8v+3\\
a_2(K)&=xv+2v+3x+1 
\end{align*}
hence by direct computation $K$ satisfies the main obstruction \eqref{eqn:main-obstruction} for all $x,v$. Thus by the same argument a positive two-bridge knot $C[2x,-2y,2z,-2w,2u,-2v]$ has no chirally cosmetic surgeries if $y \neq 1$ or $u \neq 1$.
\end{proof}

Thus it remains to treat the knot $K=C[2x,2,-2,2,-2,-2v]$.
Then
\begin{align*}
\det(K)&=20xv-8x-8v +3 \\
a_2(K)&=xv+2x+2v+1
\end{align*} 
By the symmetry \eqref{eqn:symmetry}, we may assume that $x\geq v$. 
By direct computation, $C[2x,2,2,2,2,-2v]$ satisfies the main obstruction \eqref{eqn:main-obstruction} if $x \geq 3$ and $v \geq 2$.

Finally, by direct computations, the remaining knots
$C[4,-2,2,-2,2,-4]$ and $C[2x,-2,2,-2,2,-2]$ $(x \geq 2)$ do not satisfy \eqref{eqn:sub-obst} hence they have no chirally cosmetic surgeries.

\medskip

This completes the proof of Theorem \ref{theorem:main}.

\end{document}